\newtheorem{theorem}{Theorem}[section]
\theoremstyle{plain}
\newtheorem{lemma}{Lemma}[section]
\numberwithin{equation}{section}
\begin{document}
\title[A unique continuation result]{A unique continuation result for the
plate equation and an application}
\author{Zehra Arat}
\address{{\small Department of Mathematics,} {\small Faculty of Science,
Hacettepe University, Beytepe 06800}, {\small Ankara, Turkey}}
\email{zarat@hacettepe.edu.tr}
\author{Azer Khanmamedov}
\address{{\small Department of Mathematics,} {\small Faculty of Science,
Hacettepe University, Beytepe 06800}, {\small Ankara, Turkey}}
\email{azer@hacettepe.edu.tr}
\author{Sema Simsek}
\address{{\small Department of Mathematics,} {\small Faculty of Science,
Hacettepe University, Beytepe 06800}, {\small Ankara, Turkey}}
\email{semasimsek@hacettepe.edu.tr}
\subjclass{ 35L05, 35G20, 35B60, 35B41 }
\keywords{wave equation, plate equation, unique continuation property,
global attractor}

\begin{abstract}
In this paper, we prove the unique continuation property for the weak
solution of the plate equation with non-smooth coefficients. Then, we apply
this result to study the global attractor for the semilinear plate equation
with a localized damping.
\end{abstract}

\maketitle

\section{\protect\bigskip Introduction}

This paper is devoted to investigation of unique continuation property for
the following plate equation%
\begin{equation}
u_{tt}\left( t,x\right) +\Delta ^{2}u\left( t,x\right)
+\sum\limits_{\left\vert \alpha \right\vert \leq 2}q_{\alpha }\left(
t\right) \partial _{x}^{\alpha }u\left( t,x\right) +p\left( t,x_{1}\right)
u\left( t,x\right) =0\text{, }\left( t,x\right) \in
\mathbb{R}
\times
\mathbb{R}
^{n}\text{,}  \tag{1.1}
\end{equation}%
where $\alpha =\left( \alpha _{1},...,\alpha _{n}\right) $ is a multi-index
with $\left\vert \alpha \right\vert =\alpha _{1}+...+\alpha _{n}$, and $%
x=\left( x_{1},...,x_{n}\right) $, $\partial _{x}^{\alpha }=\left( \frac{%
\partial }{\partial x_{1}}\right) ^{\alpha _{1}}...\left( \frac{\partial }{%
\partial x_{n}}\right) ^{\alpha _{n}}$.

The unique continuation property has been intensively studied for a long
time due to the significant role in some subjects, particularly, inverse
problems, control theory and stability. The first well-known result obtained
in this area is the classical Holmgren uniqueness theorem in which the
unique continuation property of solutions for elliptic partial differential
equation with analytic coefficients was proved (see for example [1], [2]).

Over the last few decades, unique continuation results for hyperbolic-like
equations have been drawing more attention. To the best of our knowledge,
one of the first unique continuation results for wave equations was obtained
in a paper by Ruiz [3] which deals with the unique continuation property of
weak $L^{2}$- solutions. This result was successfully applied in the study
of long time dynamics of wave equations, such as exponential decay of
solutions (see for example [4], [5], [6]) and existence of global attractors
(see for example [7], [8]). For the plate equations, an important work was
done by Kim [9] where the Euler-Bernoulli equation with non-smooth
coefficients was considered and the unique continuation result for solutions
in $L^{2}\left( 0,T;H^{3}\left( \Omega \right) \right) \cap W^{1,2}\left(
0,T;H^{1}\left( \Omega \right) \right) $ was obtained. Later, Isakov [10]
established unique continuation property of the strong solutions for wider
class of plate equations with bounded measurable coefficients.

In the articles mentioned above, unique continuation property was proved by
using Carleman estimates. The main goal of this paper is to give simpler
proof of unique continuation property for the weaker, precisely $L^{1}$
(with respect to the variable $x$), solutions of (1.1). To this end, using
the Fourier series expansion, we reduce the considered problem to the
estimation of solutions for an ordinary differential equation and then,
applying the representation formula for the second order differential
equation, we prove the desired result which is as follows:

\begin{theorem}
Assume that $p\in L^{\infty }\left(
\mathbb{R}
;L_{loc}^{1}\left(
\mathbb{R}
\right) \right) $ and $q_{\alpha }\in L^{\infty }\left(
\mathbb{R}
\right) $, for\ $\left\vert \alpha \right\vert \leq 2$. Let $u\in C\left(
\mathbb{R}
;L_{loc}^{1}\left(
\mathbb{R}
^{n}\right) \right) \cap L^{\infty }\left(
\mathbb{R}
;L_{loc}^{1}\left(
\mathbb{R}
^{n}\right) \right) $, with $pu\in L^{\infty }\left(
\mathbb{R}
;L_{loc}^{1}\left(
\mathbb{R}
^{n}\right) \right) $, be a weak solution of (1.1). If%
\begin{equation}
u\left( t,x\right) =0\text{, \ \ }t\in
\mathbb{R}
\text{, }\left\vert x\right\vert \geq r\text{ \ \ }  \tag{1.2}
\end{equation}%
holds for some $r>0$, then \newline
\begin{equation*}
u\left( t,x\right) =0\text{,}\ \ \ \text{a.e. in }%
\mathbb{R}
^{n}\text{,}
\end{equation*}%
for all $t\in
\mathbb{R}
$.
\end{theorem}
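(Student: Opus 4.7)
The plan is to use a Fourier series expansion in $x$ to reduce (1.1) to a countable family of scalar second order ODEs in $t$, to exploit the fact that $p$ depends only on $x_{1}$ so that the system decouples across the transverse $k'$-directions, and to close the argument with the standard Duhamel representation of a second order ODE.

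First, fix $L>r$ and periodize $u(t,\cdot)$ over $[-L,L]^{n}$. Since $\operatorname{supp} u(t,\cdot)\subset\{|x|\le r\}\subset (-L,L)^{n}$, I expand
\[
u(t,x)=\sum_{k\in\mathbb{Z}^{n}} c_{k}(t)\,e^{i(\pi/L)k\cdot x},\qquad |c_{k}(t)|\le M,
\]
uniformly in $k,t$ by the $L^{\infty}_{t}L^{1}_{x}$ hypothesis on $u$. Substituting into (1.1) and using that each $q_{\alpha}$ depends only on $t$ while $p$ depends only on $(t,x_{1})$, one obtains for every $k=(k_{1},k')\in\mathbb Z\times\mathbb Z^{n-1}$ the scalar ODE
\[
c_{k}''(t)+\Omega_{k}^{2}(t)\,c_{k}(t)=-\sum_{j_{1}\in\mathbb{Z}} p_{k_{1}-j_{1}}(t)\,c_{(j_{1},k')}(t),
\]
where $\Omega_{k}^{2}(t)=(\pi/L)^{4}|k|^{4}+\sum_{|\alpha|\le 2}q_{\alpha}(t)(i\pi k/L)^{\alpha}$ and $p_{m}(t)$ is the $m$-th $x_{1}$-Fourier coefficient of $p(t,\cdot)$. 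Up to the factor $(2L)^{-n}$ the right hand side is the $k$-th Fourier coefficient of $pu$, hence bounded uniformly in $k,t$ by some $M'$ thanks to $pu\in L^{\infty}(\mathbb R;L^{1}_{\mathrm{loc}})$. The crucial structural point is that modes with different $k'$ are completely uncoupled, so I fix $k'$ once and for all.

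Next, decompose $\Omega_{k}^{2}(t)=\mu_{k}^{2}+R_{k}(t)$ with constant leading part $\mu_{k}^{2}=(\pi|k|/L)^{4}>0$ and bounded-in-$t$ remainder $|R_{k}(t)|\lesssim 1+|k|^{2}$, and apply variation of parameters with respect to the fundamental solutions $\cos(\mu_{k}t),\sin(\mu_{k}t)$ of $y''+\mu_{k}^{2}y=0$:
\[
c_{k}(t)=A_{k}\cos(\mu_{k}t)+B_{k}\sin(\mu_{k}t)+\int_{0}^{t}\frac{\sin(\mu_{k}(t-s))}{\mu_{k}}\,h_{k}(s)\,ds,
\]
where $h_{k}(s)=-R_{k}(s)c_{k}(s)-\sum_{j_{1}}p_{k_{1}-j_{1}}(s)c_{(j_{1},k')}(s)$. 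This is the representation formula alluded to in the introduction. The argument is then closed by combining the uniform-in-$t$ bound $|c_{k}(t)|\le M$ (valid on all of $\mathbb R$) with the high-frequency smallness $\mu_{k}^{-1}\to 0$ as $|k|\to\infty$ to force decay of $c_{k}$ in $|k|$, and by propagating this decay through the $p$-coupling to conclude $c_{k}\equiv 0$ for every $k$; Fourier inversion then yields $u\equiv 0$.

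The hardest step is this last one. The coupling through $p$ prevents a clean mode-by-mode treatment, and because no time decay is built into the hypotheses the Duhamel integral could a priori grow linearly in $t$ while the $c_{k}(t)$ are merely bounded. Closing the gap requires combining the $L^{\infty}_{t}$ bounds with both the $\mu_{k}^{-1}$-smallness and the Paley-Wiener-type constraint that the sequence $(c_{(k_{1},k')})_{k_{1}\in\mathbb Z}$ consists of sample values of an entire function of exponential type $r$ in the $k_{1}$-variable (a consequence of $\operatorname{supp} u(t,\cdot)\subset[-r,r]^{n}\subsetneq[-L,L]^{n}$), in order to rule out non-trivial bounded solutions of the coupled system.
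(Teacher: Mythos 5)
Your skeleton --- Fourier expansion in $x$, reduction to a family of second order ODEs in $t$, variation of parameters --- is indeed the strategy of the paper, but the proposal is missing the one ingredient that makes it work, and you candidly flag the resulting gap without closing it. The problem is your choice of fundamental system $\cos(\mu_k t),\ \sin(\mu_k t)$: these are bounded oscillations, so the hypothesis $c_k\in L^\infty(\mathbb{R})$ places no constraint whatsoever on the constants $A_k,B_k$, and the Duhamel term $\int_0^t \mu_k^{-1}\sin(\mu_k(t-s))h_k(s)\,ds$ can grow linearly in $t$ for bounded $h_k$. There is simply no smallness mechanism, and the appeal to a ``Paley--Wiener-type constraint'' is not an argument. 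The paper's resolution is to test against $e^{\lambda x}e^{i\frac{\pi}{2r}(x+r)k}$ with a real weight $\lambda>0$. The complexified frequency $\lambda+\frac{i\pi k}{2r}$ turns the characteristic roots into $\pm c_{k,\lambda}$ with $c_{k,\lambda}=-i\left(\lambda+\frac{i\pi k}{2r}\right)^2$, so that $\mathrm{Re}(c_{k,\lambda})=\frac{\lambda\pi k}{r}>0$: the fundamental solutions $e^{\pm c t}$ are genuinely exponentially growing and decaying. Boundedness of the solution on \emph{all} of $\mathbb{R}$ then forces both constants of integration to equal explicit convergent integrals (letting $t\to\pm\infty$), which converts the ODE into an integral equation with kernel of norm $\left\vert c\right\vert \mathrm{Re}(c)^{-1}\left\Vert A\right\Vert<1$ and yields the a priori bound $\left\vert y_{k,\lambda}(t)\right\vert\leq\left(\left\vert c\right\vert\mathrm{Re}(c)-\left\Vert A\right\Vert\right)^{-1}\left\Vert B\right\Vert$ of Lemma 2.1, with $\left\vert c\right\vert\mathrm{Re}(c)\sim\lambda k^{3}$ dominating the lower order symbol $\sim k^{2}$. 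Summing $k^{4}\left\vert y_{k,\lambda}\right\vert^{2}$ and using $H^{2}(-r,r)\subset L^{\infty}(-r,r)$ gives $\left\Vert u(t,\cdot)e^{\lambda\cdot}\right\Vert_{L^{\infty}}\leq\frac{C}{\lambda}\left\Vert ue^{\lambda\cdot}\right\Vert_{L^{\infty}}$, and taking $\lambda$ large kills $u$. This is the step your oscillatory ansatz cannot reproduce.

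The second difficulty you identify --- the convolution coupling in $k_{1}$ coming from $p(t,x_{1})$ --- is real, but the paper sidesteps it entirely rather than solving it. It never expands in all $n$ variables at once: it inducts on the dimension by testing (1.1) against $x_{n}^{m}\varphi(\bar{x})\psi(x_{n})$, $m=0,1,2,\dots$. Because $p$ does not depend on $x_{n}$ and the $q_{\alpha}$ depend only on $t$, each moment $v_{m}(t,\bar{x})=\int_{-r}^{r}x_{n}^{m}u\,dx_{n}$ satisfies the same equation in $n-1$ space variables (the terms with $\partial_{x_{n}}$ produce only lower moments, already known to vanish), so the induction hypothesis gives $v_{m}\equiv 0$ for all $m$, and density of monomials in $C[-r,r]$ recovers $u\equiv 0$. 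In the base case $n=1$ the term $p(t,x)u(t,x)$ is lumped wholesale into the inhomogeneity $B_{k,\lambda}$ (this is exactly why the hypothesis $pu\in L^{\infty}(\mathbb{R};L^{1}_{loc})$ is imposed), so no mode-by-mode decoupling of $p$ is ever needed. You should restructure the proof around these two ideas --- the exponential weight and the moment induction --- before the remaining estimates can be carried out.
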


Theorem 1.1 is important from the point of view of application to the
semilinear plate equation with a localized damping and non-smooth nonlinear
terms to study the long time behaviour of solutions. We apply the main
result to the $\omega $-limit and $\alpha $-limit sets of the semilinear
plate equation (see Section 3) to show that they are subsets of the set of
stationary points.

The paper is organized as follows. In the next section, we give some
auxiliary lemmas and then prove the main result. In Section 3, we firstly
prove the asymptotic compactness of the semigroup generated by the
semilinear plate equation and then apply Theorem 1.1 to study the global
attractor of this semigroup.

\section{Proof of the main result}

We start with the following lemmas.

\begin{lemma}
Let $y\in C\left(
\mathbb{R}
;%
\mathbb{C}
\right) \cap L^{\infty }\left(
\mathbb{R}
;%
\mathbb{C}
\right) $ be a weak solution of the equation%
\begin{equation}
y^{\prime \prime }\left( t\right) +\left( A\left( t\right) -c^{2}\right)
y\left( t\right) =B\left( t\right) \text{, \ \ }t\in
\mathbb{R}
\text{,}  \tag{2.1}
\end{equation}%
where the constant $c\in
\mathbb{C}
$\ satisfies Re$(c)>0$. Assume that $A\in L^{\infty }\left(
\mathbb{R}
;%
\mathbb{C}
\right) $ is a function such that%
\begin{equation}
\text{\ }\left\Vert A\right\Vert _{L^{\infty }\left(
\mathbb{R}
;%
\mathbb{C}
\right) }<\left\vert c\right\vert \text{Re}(c)\text{,}  \tag{2.2}
\end{equation}%
and%
\begin{equation*}
B\in L^{\infty }\left(
\mathbb{R}
;%
\mathbb{C}
\right) .
\end{equation*}%
Then
\begin{equation*}
\left\vert y\left( t\right) \right\vert \leq \frac{1}{\left\vert
c\right\vert \text{Re}(c)-\left\Vert A\right\Vert _{L^{\infty }\left(
\mathbb{R}
;%
\mathbb{C}
\right) }}\left\Vert B\right\Vert _{L^{\infty }\left(
\mathbb{R}
;%
\mathbb{C}
\right) },\text{ }\forall t\in
\mathbb{R}
\text{.}
\end{equation*}
\end{lemma}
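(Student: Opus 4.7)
The plan is to treat the equation as $y'' - c^{2}y = F$ with $F(t) := B(t) - A(t)y(t)$ playing the role of an \emph{a priori} bounded forcing (it is bounded because $y$, $A$, $B$ are all in $L^{\infty}$), and then use the fact that a bounded solution of such an equation on $\mathbb{R}$ is uniquely determined by an explicit convolution formula. Specifically, since $\operatorname{Re}(c)>0$, the function
\[
G(t) = -\frac{1}{2c}\,e^{-c|t|}
\]
is a tempered fundamental solution of $\partial_t^{2} - c^{2}$: a direct distributional computation shows $G'' - c^{2}G = \delta_{0}$. Because $e^{\pm ct}$ are the only solutions of the homogeneous equation and neither is bounded on all of $\mathbb{R}$ (they grow at opposite ends, as $\operatorname{Re}(c)>0$), there is \emph{at most one} bounded solution of $y'' - c^{2}y = F$, and it must be given by
\[
y(t) = (G*F)(t) = -\frac{1}{2c}\int_{\mathbb{R}} e^{-c|t-s|}\bigl(B(s)-A(s)y(s)\bigr)\,ds.
\]

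Granted this representation, the estimate is immediate. Using $|e^{-c|t-s|}| = e^{-\operatorname{Re}(c)|t-s|}$ and $\int_{\mathbb{R}} e^{-\operatorname{Re}(c)|t-s|}\,ds = 2/\operatorname{Re}(c)$, I get
\[
|y(t)| \;\le\; \frac{1}{2|c|}\cdot\frac{2}{\operatorname{Re}(c)}\,\|F\|_{L^{\infty}} \;=\; \frac{\|B\|_{L^{\infty}} + \|A\|_{L^{\infty}}\|y\|_{L^{\infty}}}{|c|\operatorname{Re}(c)}.
\]
Taking the supremum over $t$ on the left and using hypothesis (2.2) to ensure $1 - \|A\|_{L^{\infty}}/(|c|\operatorname{Re}(c)) > 0$, I can absorb the $\|y\|_{L^\infty}$ term on the right to obtain
\[
\|y\|_{L^{\infty}} \;\le\; \frac{\|B\|_{L^{\infty}}}{|c|\operatorname{Re}(c) - \|A\|_{L^{\infty}}},
\]
which, evaluated pointwise, is exactly the claimed bound.

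The step I expect to need the most care is justifying the representation formula at the level of weak solutions, since $y$ is only assumed continuous and bounded. The cleanest route is to regard the equation $y''=(c^{2}-A)y+B$ as defining $y''$ as an element of $L^{\infty}(\mathbb{R};\mathbb{C})$ in the distributional sense, and then verify that $y - G*F$ is a bounded weak solution of the homogeneous equation $z''-c^{2}z=0$. By the classical ODE fact (elliptic regularity in one variable) any distributional solution of $z''=c^{2}z$ is smooth and of the form $\alpha e^{ct}+\beta e^{-ct}$; boundedness forces $\alpha=\beta=0$, so $y=G*F$ as claimed. Once this identification is in hand, the $L^{\infty}$ estimate follows in a couple of lines as above.
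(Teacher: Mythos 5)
Your proof is correct and essentially the same as the paper's: your convolution $y=G*(B-Ay)$ with $G(t)=-\tfrac{1}{2c}e^{-c|t|}$ is exactly the paper's representation $y(t)=\tfrac{e^{ct}}{2c}\int_t^{\infty}e^{-cs}(Ay-B)\,ds+\tfrac{e^{-ct}}{2c}\int_{-\infty}^{t}e^{cs}(Ay-B)\,ds$, and the concluding absorption of $\|A\|_{L^\infty}\|y\|_{L^\infty}$ via (2.2) is identical. The only (cosmetic) difference is how the representation is justified -- you use the fundamental solution plus uniqueness of bounded solutions of $z''-c^2z=0$, while the paper uses variation of parameters and pins down the two constants by letting $t\to\pm\infty$.
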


\begin{proof}
Firstly, we write (2.1) in the form%
\begin{equation*}
y^{\prime \prime }\left( t\right) -c^{2}y\left( t\right) =-A\left( t\right)
y\left( t\right) +B\left( t\right) \text{.}
\end{equation*}%
If we solve the following homogeneous part%
\begin{equation*}
y^{\prime \prime }\left( t\right) -c^{2}y\left( t\right) =0\text{,}
\end{equation*}%
we get the linearly independent solutions%
\begin{equation*}
\varphi _{1}\left( t\right) =e^{ct}\text{, }\varphi _{2}\left( t\right)
=e^{-ct}\text{.}
\end{equation*}%
Then, by method of variation of parameters, we have the following
representation formula (see for example [11, p.162-166]) for the solution of
(2.1)%
\begin{equation*}
y\left( t\right) =\left[ \int\limits_{t_{0}}^{t}\frac{\varphi _{2}\left(
s\right) \left( A\left( s\right) y\left( s\right) -B\left( s\right) \right)
}{W_{\varphi _{1},\varphi _{2}}\left( s\right) }ds+c_{1}\right] \varphi
_{1}\left( t\right)
\end{equation*}%
\begin{equation*}
+\left[ c_{2}-\int\limits_{t_{0}}^{t}\frac{\varphi _{1}\left( s\right)
\left( A\left( s\right) y\left( s\right) -B\left( s\right) \right) }{%
W_{\varphi _{1},\varphi _{2}}\left( s\right) }ds\right] \varphi _{2}\left(
t\right) \text{,}
\end{equation*}%
where $t_{0}\in
\mathbb{R}
$ and $W_{\varphi _{1},\varphi _{2}}$ is the Wronskian of $\varphi
_{1},\varphi _{2}$. Since%
\begin{equation*}
W_{\varphi _{1},\varphi _{2}}\left( t\right) =\left\vert
\begin{array}{cc}
e^{ct} & e^{-ct} \\
ce^{ct} & -ce^{-ct}%
\end{array}%
\right\vert =-2c\text{,}
\end{equation*}%
we have%
\begin{equation*}
y\left( t\right) =c_{1}e^{ct}+c_{2}e^{-ct}
\end{equation*}%
\begin{equation*}
-\frac{1}{2c}e^{ct}\int\limits_{t_{0}}^{t}e^{-cs}\left( A\left( s\right)
y\left( s\right) -B\left( s\right) \right) ds
\end{equation*}%
\begin{equation}
+\frac{1}{2c}e^{-ct}\int\limits_{t_{0}}^{t}e^{cs}\left( A\left( s\right)
y\left( s\right) -B\left( s\right) \right) ds\text{.}  \tag{2.3}
\end{equation}%
If we multiply both sides of (2.3) by $e^{-ct}$, we obtain%
\begin{equation*}
e^{-ct}y\left( t\right) =c_{1}+c_{2}e^{-2ct}
\end{equation*}%
\begin{equation*}
-\frac{1}{2c}\int\limits_{t_{0}}^{t}e^{-cs}\left( A\left( s\right) y\left(
s\right) -B\left( s\right) \right) ds+\frac{1}{2c}e^{-2ct}\int%
\limits_{t_{0}}^{t}e^{cs}\left( A\left( s\right) y\left( s\right) -B\left(
s\right) \right) ds\text{.}
\end{equation*}%
Passing to limit as $t\rightarrow \infty $, considering that $y\in C\left(
\mathbb{R}
;%
\mathbb{C}
\right) \cap L^{\infty }\left(
\mathbb{R}
;%
\mathbb{C}
\right) $ and Re$(c)>0$, we get%
\begin{equation}
c_{1}-\frac{1}{2c}\int\limits_{t_{0}}^{\infty }e^{-cs}\left( A\left(
s\right) y\left( s\right) -B\left( s\right) \right) ds=0\text{.}  \tag{2.4}
\end{equation}%
On the other hand, if we multiply both sides of (2.3) by $e^{ct}$, we find%
\begin{equation*}
e^{ct}y\left( t\right) =c_{2}+c_{1}e^{2ct}
\end{equation*}%
\begin{equation*}
+\frac{1}{2c}\int\limits_{t_{0}}^{t}e^{cs}\left( A\left( s\right) y\left(
s\right) -B\left( s\right) \right) ds-\frac{1}{2c}e^{2ct}\int%
\limits_{t_{0}}^{t}e^{-cs}\left( A\left( s\right) y\left( s\right) -B\left(
s\right) \right) ds\text{.}
\end{equation*}%
Passing to limit as $t\rightarrow -\infty $, we obtain%
\begin{equation}
c_{2}-\frac{1}{2c}\int\limits_{-\infty }^{t_{0}}e^{cs}\left( A\left(
s\right) y\left( s\right) -B\left( s\right) \right) ds=0\text{.}  \tag{2.5}
\end{equation}%
Now, using (2.3)-(2.5), we have%
\begin{equation*}
y\left( t_{0}\right) =c_{1}e^{ct_{0}}+c_{2}e^{-ct_{0}}
\end{equation*}%
\begin{equation*}
=\frac{e^{ct_{0}}}{2c}\int\limits_{t_{0}}^{\infty }e^{-cs}\left( A\left(
s\right) y\left( s\right) -B\left( s\right) \right) ds+\frac{e^{-ct_{0}}}{2c}%
\int\limits_{-\infty }^{t_{0}}e^{cs}\left( A\left( s\right) y\left( s\right)
-B\left( s\right) \right) ds\text{.}
\end{equation*}%
Since $t_{0}$\bigskip\ was arbitrary, we can write%
\begin{equation*}
y\left( t\right) =\frac{e^{ct}}{2c}\int\limits_{t}^{\infty }e^{-cs}\left(
A\left( s\right) y\left( s\right) -B\left( s\right) \right) ds+\frac{e^{-ct}%
}{2c}\int\limits_{-\infty }^{t}e^{cs}\left( A\left( s\right) y\left(
s\right) -B\left( s\right) \right) ds\text{, \ \ }\forall t\in
\mathbb{R}
\text{.}
\end{equation*}%
Then
\begin{equation*}
\left\vert y\left( t\right) \right\vert \leq \frac{1}{\left\vert
c\right\vert \text{Re}(c)}\left\Vert Ay-B\right\Vert _{L^{\infty }\left(
\mathbb{R}
;%
\mathbb{C}
\right) }=\alpha \left\Vert y\right\Vert _{L^{\infty }\left(
\mathbb{R}
;%
\mathbb{C}
\right) }+\frac{1}{\left\vert c\right\vert \text{Re}(c)}\left\Vert
B\right\Vert _{L^{\infty }\left(
\mathbb{R}
;%
\mathbb{C}
\right) }\text{,}
\end{equation*}%
and consequently
\begin{equation*}
\left\Vert y\right\Vert _{L^{\infty }\left(
\mathbb{R}
;%
\mathbb{C}
\right) }\leq \alpha \left\Vert y\right\Vert _{L^{\infty }\left(
\mathbb{R}
;%
\mathbb{C}
\right) }+\frac{1}{\left\vert c\right\vert \text{Re}(c)}\left\Vert
B\right\Vert _{L^{\infty }\left(
\mathbb{R}
;%
\mathbb{C}
\right) }\text{,}
\end{equation*}%
where $\alpha =\frac{1}{\left\vert c\right\vert \text{Re}(c)}\left\Vert
A\right\Vert _{L^{\infty }\left(
\mathbb{R}
;%
\mathbb{C}
\right) }.$Since $\alpha <1$ (see (2.2)), by the above inequality, we obtain
the claim of lemma.
\end{proof}

\begin{lemma}
Assume that $q_{2}\in L^{\infty }\left(
\mathbb{R}
;L_{loc}^{1}\left(
\mathbb{R}
\right) \right) $ and $q_{j}\in L^{\infty }\left(
\mathbb{R}
\right) $, for $j=0,1$. Let $u\in C\left(
\mathbb{R}
;L_{loc}^{1}\left(
\mathbb{R}
\right) \right) \cap L^{\infty }\left(
\mathbb{R}
;L_{loc}^{1}\left(
\mathbb{R}
\right) \right) ,$ with $q_{2}u\in L^{\infty }\left(
\mathbb{R}
;L_{loc}^{1}\left(
\mathbb{R}
\right) \right) $, be a weak solution of the equation%
\begin{equation*}
u_{tt}\left( t,x\right) +u_{xxxx}\left( t,x\right) +q_{0}\left( t\right)
u_{xx}\left( t,x\right) +q_{1}\left( t\right) u_{x}\left( t,x\right)
\end{equation*}%
\begin{equation}
+q_{2}\left( t,x\right) u\left( t,x\right) =0\text{, \ }\left( t,x\right)
\in
\mathbb{R}
\times
\mathbb{R}
\text{.}  \tag{2.6}
\end{equation}%
If
\begin{equation}
u\left( t,x\right) =0\text{,}\ \ \ t\in
\mathbb{R}
\text{, \ }\left\vert x\right\vert \geq r\text{ }  \tag{2.7}
\end{equation}%
holds for some $r>0$, then%
\begin{equation*}
u\left( t,x\right) =0\text{,}\ \ \ \text{a.e. in }%
\mathbb{R}
\text{,}
\end{equation*}%
for all $t\in
\mathbb{R}
$.
\end{lemma}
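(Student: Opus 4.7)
My plan is to Fourier-transform in $x$ and reduce the problem, for each complex frequency, to a second-order ODE in $t$ to which Lemma 2.1 can be applied.

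Since $u(t,\cdot)$ is supported in $[-r,r]$ and belongs to $L^{1}_{\mathrm{loc}}(\mathbb{R})$, hence to $L^{1}(\mathbb{R})$, the Fourier transform
\[
\widehat{u}(t,\xi):=\int_{\mathbb{R}} u(t,x)\,e^{-i\xi x}\,dx
\]
is well defined for every $\xi\in\mathbb{C}$, is entire in $\xi$, and satisfies the Paley--Wiener bound $|\widehat{u}(t,\xi)|\leq\|u\|_{L^{\infty}_{t}L^{1}_{x}}\,e^{r|\operatorname{Im}\xi|}$ uniformly in $t$. The analogous statement holds for $\widehat{q_{2}u}(t,\xi)$, using the hypothesis $q_{2}u\in L^{\infty}(\mathbb{R};L^{1}_{\mathrm{loc}}(\mathbb{R}))$ together with the compact support of $u$.

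Testing the weak form of (2.6) against the entire family $x\mapsto e^{-i\xi x}$ gives, for each complex $\xi$, the ODE
\[
\widehat{u}_{tt}(t,\xi)+\bigl(\xi^{4}-q_{0}(t)\xi^{2}+iq_{1}(t)\xi\bigr)\widehat{u}(t,\xi)=-\widehat{q_{2}u}(t,\xi),\qquad t\in\mathbb{R}.
\]
For $\xi=\rho e^{i\theta}$ with $\rho$ large and $\theta$ chosen so that $\sin(2\theta)<0$ (for concreteness $\theta=-\pi/4$), I take $c:=\sqrt{-\xi^{4}}$ using the branch with $\operatorname{Re}(c)>0$, and set $A(t):=-q_{0}(t)\xi^{2}+iq_{1}(t)\xi$ and $B(t):=-\widehat{q_{2}u}(t,\xi)$, so that the ODE takes exactly the form (2.1). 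Since
\[
|c|\operatorname{Re}(c)=\rho^{4}|\sin(2\theta)|\quad\text{while}\quad\|A\|_{L^{\infty}}\leq\|q_{0}\|_{L^{\infty}}\rho^{2}+\|q_{1}\|_{L^{\infty}}\rho=O(\rho^{2}),
\]
the smallness condition (2.2) is fulfilled for every sufficiently large $\rho$, and Lemma 2.1 then produces
\[
|\widehat{u}(t,\rho e^{i\theta})|\leq\frac{C\,e^{r\rho|\sin\theta|}}{\rho^{4}|\sin(2\theta)|-\|q_{0}\|_{L^{\infty}}\rho^{2}-\|q_{1}\|_{L^{\infty}}\rho},\qquad t\in\mathbb{R}.
\]

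The main obstacle is to convert this polynomial-in-$\rho$ improvement over the Paley--Wiener envelope into $\widehat{u}(t,\cdot)\equiv 0$. My plan is to let $\theta$ sweep through both good cones and to use a Phragm\'en--Lindel\"of argument in the intermediate sectors in order to extend the improved bound toward rays that approach the real and imaginary axes. Applied to $\widehat{u}(t,i\eta)=\int u(t,x)\,e^{\eta x}\,dx$, viewed as a one-sided Laplace transform, the resulting decay together with a Watson-type asymptotic analysis as $\eta\to\pm\infty$ should force the traces of $u(t,\cdot)$ and of its first few $x$-derivatives to vanish at $x=\pm r$. A final bootstrap, using the fourth-order ODE $u_{xxxx}=-u_{tt}-q_{0}u_{xx}-q_{1}u_{x}-q_{2}u$ pointwise in $t$, should then propagate the vanishing from the boundary of the support into the interior and conclude $u\equiv 0$. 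This last propagation is the technically most delicate step, because of the low regularity of $u$ and the non-smoothness of the coefficients $q_{0},q_{1},q_{2}$.
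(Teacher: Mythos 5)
Your first half is sound and is essentially the paper's opening move: test (2.6) against complex exponentials, use the support condition (2.7) to reduce to the ODE (2.1), and identify the admissible frequencies as those where $\left\vert c\right\vert \mathrm{Re}(c)$ dominates $\left\Vert A\right\Vert _{L^{\infty }}$; your computation $\left\vert c\right\vert \mathrm{Re}(c)=\rho ^{4}\left\vert \sin (2\theta )\right\vert $ is correct. The gap is in the second half, which you yourself flag as the main obstacle. The information you extract --- a polynomial ($\rho ^{-4}$) gain over the Paley--Wiener envelope $e^{r\left\vert \mathrm{Im}\,\xi \right\vert }$ along rays with $\sin (2\theta )<0$ --- cannot imply $\widehat{u}\equiv 0$: every function in $C_{0}^{\infty }(-r,r)$ satisfies such bounds (and far stronger ones, on all of $\mathbb{C}$), so no Phragm\'{e}n--Lindel\"{o}f or Watson-type argument can upgrade a polynomial gain to vanishing. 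Moreover the admissible region degenerates near the axes, since (2.2) forces $\left\vert \sin (2\theta )\right\vert \gtrsim \rho ^{-2}$; in particular the estimate is never available on the ray $\xi =i\eta $ where your Laplace-transform asymptotics would have to be run.

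The missing idea is that the estimate must be closed by \emph{absorption}, exploiting that the source $B=-\widehat{q_{2}u}$ is built from $u$ itself. The paper fixes the real exponential rate $\lambda $ and lets the oscillation range over the complete Dirichlet sine system on $(-r,r)$, i.e.\ tests with $e^{\lambda x}e^{i\frac{\pi }{2r}(x+r)k}$ for all $k\in \mathbb{Z}^{+}$. Lemma 2.1 then gives $\left\vert y_{k,\lambda }(t)\right\vert \lesssim (\lambda k^{3})^{-1}\left\Vert B_{k,\lambda }\right\Vert _{L^{\infty }}$ uniformly in $k$, and since $\{\sin (\frac{\pi }{2r}(x+r)k)\}_{k}$ is an orthonormal eigenbasis of $-\partial _{x}^{2}$ with Dirichlet conditions, summing $k^{4}\left\vert y_{k,\lambda }\right\vert ^{2}$ reconstructs the full $H^{2}(-r,r)$ norm of $u(t,\cdot )e^{\lambda \cdot }$, hence by Sobolev embedding its $L^{\infty }$ norm, with the bound $\frac{C}{\lambda }\left\Vert q_{2}\right\Vert _{L^{\infty }(\mathbb{R};L^{1})}\left\Vert ue^{\lambda \cdot }\right\Vert _{L^{\infty }}$ --- the same quantity on both sides, with a factor less than $1$ once $\lambda $ is large. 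That self-improvement, not decay in the frequency variable, is what forces $u\equiv 0$. Your single-ray data $\widehat{u}(t,\rho e^{-i\pi /4})$ do not suffice to reconstruct $\left\Vert u(t,\cdot )e^{\lambda \cdot }\right\Vert _{L^{\infty }(-r,r)}$, so the feedback loop cannot be closed from what you have established; to repair the argument you would need to run your estimate over a full lattice of frequencies with common imaginary part (equivalently, the paper's $\lambda +\frac{i\pi k}{2r}$) and then absorb.
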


\begin{proof}
Testing the equation (2.6)$_{\text{ }}$with $e^{\lambda x}e^{i\frac{\pi }{2r}%
\left( x+r\right) k}$ in $%
\mathbb{R}
$, where $\lambda >0$ and $k\in
\mathbb{Z}
^{+}$, and considering (2.7), we get%
\begin{equation*}
\frac{d^{2}}{dt^{2}}\int\limits_{-r}^{r}u\left( t,x\right) e^{\lambda x}e^{i%
\frac{\pi }{2r}\left( x+r\right) k}dx+\left( \lambda +\frac{i\pi k}{2r}%
\right) ^{4}\int\limits_{-r}^{r}u\left( t,x\right) e^{\lambda x}e^{i\frac{%
\pi }{2r}\left( x+r\right) k}dx
\end{equation*}%
\begin{equation*}
+\left( \lambda +\frac{i\pi k}{2r}\right)
^{2}q_{0}(t)\int\limits_{-r}^{r}u\left( t,x\right) e^{\lambda x}e^{i\frac{%
\pi }{2r}\left( x+r\right) k}dx
\end{equation*}%
\begin{equation*}
-\left( \lambda +\frac{i\pi k}{2r}\right)
q_{1}(t)\int\limits_{-r}^{r}u\left( t,x\right) e^{\lambda x}e^{i\frac{\pi }{%
2r}\left( x+r\right) k}dx
\end{equation*}%
\begin{equation*}
+\int\limits_{-r}^{r}q_{2}\left( t,x\right) u\left( t,x\right) e^{\lambda
x}e^{i\frac{\pi }{2r}\left( x+r\right) k}dx=0\text{.}
\end{equation*}%
If we define%
\begin{equation*}
y_{k,\lambda }\left( t\right) :=\int\limits_{-r}^{r}u\left( t,x\right)
e^{\lambda x}e^{i\frac{\pi }{2r}\left( x+r\right) k}dx\text{,}
\end{equation*}%
then we have%
\begin{equation*}
y_{k,\lambda }^{\prime \prime }\left( t\right) +\left[ \left( \lambda +\frac{%
i\pi k}{2r}\right) ^{4}+\left( \lambda +\frac{i\pi k}{2r}\right)
^{2}q_{0}\left( t\right) \right.
\end{equation*}%
\begin{equation*}
-\left. \left( \lambda +\frac{i\pi k}{2r}\right) q_{1}\left( t\right) \right]
y_{k,\lambda }\left( t\right) =-\int\limits_{-r}^{r}q_{2}\left( t,x\right)
u\left( t,x\right) e^{\lambda x}e^{i\frac{\pi }{2r}\left( x+r\right) k}dx%
\text{.}
\end{equation*}%
Now, denote
\begin{equation*}
A_{k,\lambda }\left( t\right) :=\left( \lambda +\frac{i\pi k}{2r}\right)
^{2}q_{0}\left( t\right) -\left( \lambda +\frac{i\pi k}{2r}\right) q_{1}(t),
\end{equation*}%
\begin{equation*}
B_{k,\lambda }\left( t\right) =-\int\limits_{-r}^{r}q_{2}\left( t,x\right)
u\left( t,x\right) e^{\lambda x}e^{i\frac{\pi }{2r}\left( x+r\right) k}dx,
\end{equation*}%
and%
\begin{equation*}
c_{k,\lambda }:=-i\left( \lambda +\frac{i\pi k}{2r}\right) ^{2}\text{.}
\end{equation*}%
Then, there exists $\lambda _{0}>0$ such that $A_{k,\lambda }\left( t\right)
$, $B_{k,\lambda }\left( t\right) $ and $c_{k,\lambda }$ satisfy the
conditions of Lemma 2.1 for all $\lambda \geq \lambda _{0}$ and $k\in
\mathbb{Z}
^{+}$. Hence, applying Lemma 2.1, we obtain%
\begin{equation*}
\left\vert y_{k,\lambda }\left( t\right) \right\vert \leq \frac{\widetilde{c}%
_{1}}{\lambda k^{3}}\left\Vert B_{k,\lambda }\right\Vert _{L^{\infty }\left(
\mathbb{R}
;%
\mathbb{C}
\right) }\text{,}\ \ \ \forall t\in
\mathbb{R}
\text{, }\forall \lambda \geq \lambda _{0}\text{, \ }\forall k\in
\mathbb{Z}
^{+}\text{,}
\end{equation*}%
which, together with (2.7), give us%
\begin{equation*}
\sum_{k=1}^{\infty }k^{4}\left\vert y_{k,\lambda }\left( t\right)
\right\vert ^{2}\leq \frac{\widetilde{c}_{2}}{\lambda ^{2}}\left( \underset{%
t\in
\mathbb{R}
}{ess\sup }\int\limits_{-r}^{r}\left\vert q_{2}\left( t,x\right) u\left(
t,x\right) \right\vert e^{\lambda x}dx\right) ^{2}<\infty ,\text{ }\forall
t\in
\mathbb{R}
\text{, }\forall \lambda \geq \lambda _{0}\text{,}
\end{equation*}%
and by the definition of $y_{k,\lambda }\left( t\right) $, we find%
\begin{equation*}
\sum_{k=1}^{\infty }k^{4}\left\vert \int\limits_{-r}^{r}u\left( t,x\right)
e^{\lambda x}\sin \left( \frac{\pi }{2r}\left( x+r\right) k\right)
dx\right\vert ^{2}
\end{equation*}%
\begin{equation*}
\leq \frac{\widetilde{c}_{2}}{\lambda ^{2}}\left( \underset{t\in
\mathbb{R}
}{ess\sup }\int\limits_{-r}^{r}\left\vert q_{2}\left( t,x\right) u\left(
t,x\right) \right\vert e^{\lambda x}dx\right) ^{2}<\infty ,\text{ }\forall
t\in
\mathbb{R}
\text{, }\forall \lambda \geq \lambda _{0}\text{.}
\end{equation*}%
Since $\left\{ \frac{1}{\sqrt{r}}\sin \left( \frac{\pi }{2r}\left(
x+r\right) k\right) \right\} _{k=1}^{\infty }$ is orthonormal basis in $%
L^{2}\left( -r,r\right) $ consisting of the eigenfunctions of the operator $-%
\frac{\partial ^{2}}{\partial x^{2}}$ in $L^{2}\left( -r,r\right) $ with the
domain $H^{2}\left( -r,r\right) \cap H_{0}^{1}\left( -r,r\right) $, by the
last inequality, we find that $u\in L^{\infty }\left(
\mathbb{R}
;H^{2}\left( -r,r\right) \cap H_{0}^{1}\left( -r,r\right) \right) $, and%
\begin{equation*}
\int\limits_{-r}^{r}\left\vert \frac{\partial ^{2}}{\partial x^{2}}(u\left(
t,x\right) e^{\lambda x})\right\vert ^{2}dx\leq \frac{\widetilde{c}_{2}}{%
\lambda ^{2}}\left( \underset{t\in
\mathbb{R}
}{ess\sup }\int\limits_{-r}^{r}\left\vert q_{2}\left( t,x\right) u\left(
t,x\right) \right\vert e^{\lambda x}dx\right) ^{2},\text{ \ \ }\forall t\in
\mathbb{R}
\text{, \ }\forall \lambda \geq \lambda _{0}\text{.}
\end{equation*}%
Then, considering $H^{2}\left( -r,r\right) \subset L^{\infty }(-r,r),$ we get%
\begin{equation*}
\left\Vert u\left( t,\cdot \right) e^{\lambda \cdot }\right\Vert _{L^{\infty
}(-r,r)}\leq \frac{\widetilde{c}_{3}}{\lambda }\left\Vert u\left( \cdot
,\cdot \right) e^{\lambda \cdot }\right\Vert _{L^{\infty }(%
\mathbb{R}
\times (-r,r))},\text{ \ \ }\forall t\in
\mathbb{R}
\text{, \ }\forall \lambda \geq \lambda _{0}\text{.}
\end{equation*}%
Choosing $\lambda $ large enough in the above inequality, we obtain%
\begin{equation*}
u\left( t,x\right) =0\text{, \ \ a.e. in }%
\mathbb{R}
\text{,}
\end{equation*}%
for all $t\in
\mathbb{R}
$.
\end{proof}

Now, we can prove the main theorem. We use induction on $n$. For $n=1$, we
obtain the result by Lemma 2.2. Now, assume that the claim of \ Theorem 1.1
holds in $\left( n-1\right) $- dimensional (with respect to the space
variable) case. Let $u\left( t,x\right) $ be a weak solution of (1.1),
satisfying (1.2), where $x:=\left( \overset{\_}{x},x_{n}\right) :=\left(
x_{1},x_{2},...,x_{n-1},x_{n}\right) \in
\mathbb{R}
^{n}$. For $\psi \in C_{0}^{\infty }\left(
\mathbb{R}
\right) $, $\psi \equiv 1$ in $\left[ -r,r\right] $ and $\varphi \in
C_{0}^{\infty }\left(
\mathbb{R}
^{n-1}\right) $, by testing the equation (1.1) with the function $\varphi
\left( \overset{\_}{x}\right) \psi \left( x_{n}\right) $ in $%
\mathbb{R}
^{n}$ and considering (1.2), we obtain%
\begin{equation*}
\frac{d^{2}}{dt^{2}}\int\limits_{%
\mathbb{R}
^{n-1}}\varphi \left( \overset{\_}{x}\right) \int\limits_{-r}^{r}u\left(
t,x\right) dx_{n}d\overline{x}+\int\limits_{%
\mathbb{R}
^{n-1}}\Delta _{\overset{\_}{x}}^{2}\varphi \left( \overset{\_}{x}\right)
\int\limits_{-r}^{r}u\left( t,x\right) dx_{n}d\overline{x}
\end{equation*}%
\begin{equation*}
+\sum\limits_{\underset{\alpha _{n}=0}{\left\vert \alpha \right\vert \leq 2}%
}\left( -1\right) ^{\left\vert \alpha \right\vert }q_{\alpha }\left(
t\right) \int\limits_{%
\mathbb{R}
^{n-1}}\partial _{\overline{x}}^{\alpha }\varphi \left( \overset{\_}{x}%
\right) \int\limits_{-r}^{r}u\left( t,x\right) dx_{n}d\overline{x}
\end{equation*}%
\begin{equation}
+\int\limits_{%
\mathbb{R}
^{n-1}}p\left( t,x_{1}\right) \varphi \left( \overset{\_}{x}\right)
\int\limits_{-r}^{r}u\left( t,x\right) dx_{n}d\overline{x}=0  \tag{2.8}
\end{equation}%
where $\Delta _{\overset{\_}{x}}=\sum\limits_{i=1}^{n-1}\frac{\partial ^{2}}{%
\partial x_{i}^{2}}$ and $\partial _{\overline{x}}^{\alpha }=\left( \frac{%
\partial }{\partial x_{1}}\right) ^{\alpha _{1}}...\left( \frac{\partial }{%
\partial x_{n-1}}\right) ^{\alpha _{n-1}}$. If we define%
\begin{equation*}
v_{0}\left( t,\overset{\_}{x}\right) :=\int\limits_{-r}^{r}u\left(
t,x\right) dx_{n}\text{,}
\end{equation*}%
then, by (2.8) and (1.2), $v_{0}\left( t,\overset{\_}{x}\right) $ is a weak
solution of the following $\left( n-1\right) $-dimensional (with respect to
the space variable) problem%
\begin{equation*}
\left\{
\begin{array}{l}
v_{0tt}\left( t,\overset{\_}{x}\right) +\Delta _{\overset{\_}{x}%
}^{2}v_{0}\left( t,\overset{\_}{x}\right) +\sum\limits_{\underset{\alpha
_{n}=0}{\left\vert \alpha \right\vert \leq 2}}q_{\alpha }\left( t\right)
\partial _{\overline{x}}^{\alpha }v_{0}\left( t,\overset{\_}{x}\right)
+p\left( t,x_{1}\right) v_{0}\left( t,\overset{\_}{x}\right) =0\text{, }%
\left( t,\overset{\_}{x}\right) \in
\mathbb{R}
\times
\mathbb{R}
^{n-1}\text{,} \\
v_{0}\left( t,\overline{x}\right) =0\text{, \ }\left\vert \overline{x}%
\right\vert \geq r\text{.}%
\end{array}%
\right.
\end{equation*}%
Using induction assumption, we have%
\begin{equation}
v_{0}\left( t,\overset{\_}{x}\right) =0\text{, \ \ a.e. in }%
\mathbb{R}
^{n-1}\text{,}  \tag{2.9}
\end{equation}%
for all $t\in
\mathbb{R}
$. Now, by testing the equation (1.1) with $x_{n}\varphi \left( \overset{\_}{%
x}\right) \psi \left( x_{n}\right) $ in $%
\mathbb{R}
^{n}$, considering (1.2) and (2.9), we get
\begin{equation*}
\frac{d^{2}}{dt^{2}}\int\limits_{%
\mathbb{R}
^{n-1}}\varphi \left( \overset{\_}{x}\right) \int\limits_{-r}^{r}u\left(
t,x\right) x_{n}dx_{n}d\overline{x}+\int\limits_{%
\mathbb{R}
^{n-1}}\Delta _{\overset{\_}{x}}^{2}\varphi \left( \overset{\_}{x}\right)
\int\limits_{-r}^{r}u\left( t,x\right) x_{n}dx_{n}d\overline{x}
\end{equation*}%
\begin{equation*}
+\sum\limits_{\underset{\alpha _{n}=0}{\left\vert \alpha \right\vert \leq 2}%
}\left( -1\right) ^{\left\vert \alpha \right\vert }q_{\alpha }\left(
t\right) \int\limits_{%
\mathbb{R}
^{n-1}}\partial _{\overline{x}}^{\alpha }\varphi \left( \overset{\_}{x}%
\right) \int\limits_{-r}^{r}u\left( t,x\right) x_{n}dx_{n}d\overline{x}\text{%
.}
\end{equation*}%
\begin{equation}
+\int\limits_{%
\mathbb{R}
^{n-1}}p\left( t,x_{1}\right) \varphi \left( \overset{\_}{x}\right)
\int\limits_{-r}^{r}u\left( t,x\right) x_{n}dx_{n}d\overline{x}=0.
\tag{2.10}
\end{equation}%
If we define%
\begin{equation*}
v_{1}\left( t,\overset{\_}{x}\right) :=\int\limits_{-r}^{r}x_{n}u\left(
t,x\right) dx_{n}\text{,}
\end{equation*}%
then, by (2.10) and (1.3), $v_{1}\left( t,\overset{\_}{x}\right) $ is a weak
solution of the following problem
\begin{equation*}
\left\{
\begin{array}{l}
v_{1tt}\left( t,\overset{\_}{x}\right) +\Delta _{\overset{\_}{x}%
}^{2}v_{1}\left( t,\overset{\_}{x}\right) +\sum\limits_{\underset{\alpha
_{n}=0}{\left\vert \alpha \right\vert \leq 2}}q_{\alpha }\left( t\right)
\partial _{\overline{x}}^{\alpha }v_{1}\left( t,\overset{\_}{x}\right)
+p\left( t,x_{1}\right) v_{1}\left( t,\overset{\_}{x}\right) =0\text{, }%
\left( t,\overset{\_}{x}\right) \in
\mathbb{R}
\times
\mathbb{R}
^{n-1}\text{,} \\
v_{1}\left( t,\overline{x}\right) =0\text{, \ }\left\vert \overline{x}%
\right\vert \geq r\text{ }%
\end{array}%
\right.
\end{equation*}%
and so, by induction assumption,%
\begin{equation}
v_{1}\left( t,\overset{\_}{x}\right) =0\text{, \ \ a.e. in }%
\mathbb{R}
^{n-1}\text{,}  \tag{2.11}
\end{equation}%
for all $t\in
\mathbb{R}
$. Similarly, by testing the equation (1.1) with $x_{n}^{2}\varphi \left(
\overset{\_}{x}\right) \psi \left( x_{n}\right) $ in $%
\mathbb{R}
^{n}$, considering (1.2), (2.9) and (2.11), we find%
\begin{equation*}
\frac{d^{2}}{dt^{2}}\int\limits_{%
\mathbb{R}
^{n-1}}\varphi \left( \overset{\_}{x}\right) \int\limits_{-r}^{r}u\left(
t,x\right) x_{n}^{2}dx_{n}d\overline{x}+\int\limits_{%
\mathbb{R}
^{n-1}}\Delta _{\overline{x}}^{2}\varphi \left( \overset{\_}{x}\right)
\int\limits_{-r}^{r}u\left( t,x\right) x_{n}^{2}dx_{n}d\overline{x}
\end{equation*}%
\begin{equation*}
+\sum\limits_{\underset{\alpha _{n}=0}{\left\vert \alpha \right\vert \leq 2}%
}\left( -1\right) ^{\left\vert \alpha \right\vert }q_{\alpha }\left(
t\right) \int\limits_{%
\mathbb{R}
^{n-1}}\partial _{\overline{x}}^{\alpha }\varphi \left( \overset{\_}{x}%
\right) \int\limits_{-r}^{r}u\left( t,x\right) x_{n}^{2}dx_{n}d\overline{x}
\end{equation*}%
\begin{equation}
+\int\limits_{%
\mathbb{R}
^{n-1}}p\left( t,x_{1}\right) \varphi \left( \overset{\_}{x}\right)
\int\limits_{-r}^{r}u\left( t,x\right) x_{n}^{2}dx_{n}d\overline{x}=0
\tag{2.12}
\end{equation}%
If we define%
\begin{equation*}
v_{2}\left( t,\overset{\_}{x}\right) :=\int\limits_{-r}^{r}x_{n}^{2}u\left(
t,x\right) dx_{n}\text{,}
\end{equation*}%
then by (2.12) and (1.2), $v_{2}\left( t,\overset{\_}{x}\right) $ is a weak
solution of the following problem
\begin{equation*}
\left\{
\begin{array}{l}
v_{2tt}\left( t,\overset{\_}{x}\right) +\Delta _{\overset{\_}{x}%
}^{2}v_{2}\left( t,\overset{\_}{x}\right) +\sum\limits_{\underset{\alpha
_{n}=0}{\left\vert \alpha \right\vert \leq 2}}q_{\alpha }\left( t\right)
\partial _{\overline{x}}^{\alpha }v_{2}\left( t,\overset{\_}{x}\right)
+p\left( t,x_{1}\right) v_{2}\left( t,\overset{\_}{x}\right) =0\text{, }%
\left( t,\overset{\_}{x}\right) \in
\mathbb{R}
\times
\mathbb{R}
^{n-1}\text{,} \\
v_{2}\left( t,\overline{x}\right) =0\text{, \ }\left\vert \overline{x}%
\right\vert \geq r\text{.}%
\end{array}%
\right.
\end{equation*}%
By induction assumption,%
\begin{equation*}
v_{2}\left( t,\overset{\_}{x}\right) =0\text{, \ \ a.e. in }%
\mathbb{R}
^{n-1}\text{,}
\end{equation*}%
for all $t\in
\mathbb{R}
$. Continuing this procedure, we have
\begin{equation*}
v_{m}\left( t,\overset{\_}{x}\right) =0\text{, \ \ a.e. in }%
\mathbb{R}
^{n-1}\text{, }m=0,1,...
\end{equation*}%
for all $t\in
\mathbb{R}
$, where
\begin{equation*}
v_{m}\left( t,\overset{\_}{x}\right) :=\int\limits_{-r}^{r}x_{n}^{m}u\left(
t,x\right) dx_{n}\text{. }
\end{equation*}%
Since $u\in C\left(
\mathbb{R}
,L^{1}\left(
\mathbb{R}
^{n}\right) \right) $ and the set of monomials is dense in $C\left[ -r,r%
\right] $, we obtain
\begin{equation*}
\int\limits_{-r}^{r}\varphi \left( x_{n}\right) u\left( t,x\right) dx_{n}=0%
\text{, \ \ a.e. in }%
\mathbb{R}
^{n-1}\text{, }
\end{equation*}%
for all $t\in
\mathbb{R}
$ and $\varphi \in C\left[ -r,r\right] $, which gives us
\begin{equation*}
u\left( t,x\right) =0\text{, \ \ a.e. in }%
\mathbb{R}
^{n}\text{,}
\end{equation*}%
for all $t\in
\mathbb{R}
$.\newline

\section{An application: Global attractor for the semilinear plate equation}

Let $\Omega \subset
\mathbb{R}
^{n}$, $n\geq 1$, be a bounded domain with sufficiently smooth boundary $%
\partial \Omega $. We consider the following initial boundary value problem:%
\begin{equation*}
u_{tt}\left( t,x\right) +\Delta ^{2}u\left( t,x\right) +\alpha \left(
x\right) u_{t}\left( t,x\right) -f_{1}\left( \left\Vert \nabla u\left(
t\right) \right\Vert _{L^{2}\left( \Omega \right) }\right) \Delta u\left(
t,x\right)
\end{equation*}%
\begin{equation}
+f_{2}\left( \left\Vert u\left( t\right) \right\Vert _{L^{2}\left( \Omega
\right) }\right) u\left( t,x\right) +\beta (x_{1})u\left( t,x\right) =0\text{%
, }(t,x)\in (0,\infty )\times \Omega \text{,}  \tag{3.1}
\end{equation}%
\begin{equation}
u\left( t,x\right) =\frac{\partial }{\partial \nu }u\left( t,x\right) =0%
\text{, \ \ }\left( t,x\right) \in \left( 0,\infty \right) \times \partial
\Omega \text{,\ \ \ \ \ \ \ \ \ \ \ \ \ \ \ \ \ \ \ \ \ \ }  \tag{3.2}
\end{equation}%
\begin{equation}
u\left( 0,x\right) =u_{0}\left( x\right) \text{, \ }\ \ \ \ u_{t}\left(
0,x\right) =u_{1}\left( x\right) \text{, \ \ }x\in \Omega \text{, \ \ \ \ \
\ \ \ \ \ \ \ \ \ \ \ \ \ \ \ }  \tag{3.3}
\end{equation}%
where $\nu $ is outer unit normal vector, the functions $\alpha $, $\beta $,
$f_{1}$ and $f_{2}$ satisfy the following conditions:%
\begin{equation}
\alpha \in L^{\infty }\left( \Omega \right) ,\text{ }\alpha \left( \cdot
\right) \geq 0\text{, \ \ a.e. in }\Omega \text{,}  \tag{3.4}
\end{equation}%
\begin{equation}
\alpha \left( \cdot \right) \geq \alpha _{0}>0\text{, \ \ a.e. in }\omega
\subset \Omega \text{, where }\omega \text{ is a neighbourhood of }\partial
\Omega \text{,}  \tag{3.5}
\end{equation}%
\begin{equation}
\beta \in L^{1}\left( \Omega _{1}\right) \text{, }\beta \left( \cdot \right)
\geq 0\text{, \ a.e. in }\Omega _{1}\text{, where }\Omega _{1}=\{x_{1}\in
\mathbb{R}
\text{: }\exists y\in
\mathbb{R}
^{n-1}\text{, }(x_{1},y)\in \Omega \}  \tag{3.6}
\end{equation}%
\begin{equation}
f_{1},f_{2}\in C^{0,1}\left(
\mathbb{R}
^{+}\right) \text{ \ and \ }\liminf\limits_{s\rightarrow \infty }f_{1}\left(
s\right) \geq 0\text{, }\liminf\limits_{s\rightarrow \infty }f_{2}\left(
s\right) \geq 0\text{.}  \tag{3.7}
\end{equation}%
In particular case when $f_{1}\left( s\right) =s^{2}-c$, $f_{2}\equiv 0$ and
$\beta \equiv 0$, where $c$ is a constant, equation (3.1) becomes Berger
equation (see [12]).

Now, denoting $A$\ $(w_{{\small 1}},$ $w_{{\small 2}})=(w_{{\small 2}},$ $%
-\Delta ^{{\small 2}}w_{{\small 1}}-\alpha (\cdot )w_{{\small 2}})$, $%
D(A)=H_{0}^{2}\left( \Omega \right) \times L^{2}\left( \Omega \right) $ and $%
\Phi (v(x))=f_{1}\left( \left\Vert \nabla v\right\Vert _{L^{2}\left( \Omega
\right) }\right) \Delta v\left( x\right) -f_{2}\left( \left\Vert
v\right\Vert _{L^{2}\left( \Omega \right) }\right) v\left( x\right) -\beta
(x_{1})v\left( x\right) $, we can reduce (3.1)-(3.3) to the problem%
\begin{equation}
\left\{
\begin{array}{c}
\frac{d}{dt}\left( u,u_{t}\right) =A(u,u_{t})+(0,\Phi (u)) \\
(u(0),u_{t}(0))=\left( u_{0},u_{1}\right)%
\end{array}%
\right.  \tag{3.8}
\end{equation}%
in $L^{2}\left( \Omega \right) \times H^{-2}\left( \Omega \right) $. Since $%
A $ is isomorphism between $H_{0}^{2}\left( \Omega \right) \times
L^{2}\left( \Omega \right) $ and $L^{2}\left( \Omega \right) \times
H^{-2}\left( \Omega \right) $, also between $\left( H^{4}\left( \Omega
\right) \cap H_{0}^{2}\left( \Omega \right) \right) \times H_{0}^{2}\left(
\Omega \right) $ and $H_{0}^{2}\left( \Omega \right) \times L^{2}\left(
\Omega \right) $, defining the equivalent norm $\left\Vert A^{-1}(w_{{\small %
1}},w_{{\small 2}})\right\Vert _{H_{0}^{2}\left( \Omega \right) \times
L^{2}\left( \Omega \right) }$ in $L^{2}\left( \Omega \right) \times
H^{-2}\left( \Omega \right) $, it is easy to verify that $A$ is maximal
dissipative operator in $L^{2}\left( \Omega \right) \times H^{-2}\left(
\Omega \right) $. Hence, $\left\{ e^{tA}\right\} _{t\geq 0}$ is a linear \
continuous semigroup in $H_{0}^{2}\left( \Omega \right) \times L^{2}\left(
\Omega \right) $ and $L^{2}\left( \Omega \right) \times H^{-2}\left( \Omega
\right) $, so, by the interpolation theorem, also in $H_{0}^{1}\left( \Omega
\right) \times H^{-1}\left( \Omega \right) $. Since, the nonlinear operator $%
\Phi :H_{0}^{1}\left( \Omega \right) \rightarrow H^{-1}\left( \Omega \right)
$ satisfies Lipschitz condition%
\begin{equation*}
\left\Vert \Phi (v_{1})-\Phi (v_{2})\right\Vert _{H^{-1}\left( \Omega
\right) }\leq c(r)\left\Vert v_{1}-v_{2}\right\Vert _{H^{1}\left( \Omega
\right) }\text{, \ \ }\forall v_{1},v_{2}\in H_{0}^{2}\left( \Omega \right)
\text{,}
\end{equation*}%
where $c:R_{+}\rightarrow R_{+}$ is a nondecreasing function and $r=\max
\left\{ \left\Vert v_{1}\right\Vert _{H^{2}\left( \Omega \right)
},\left\Vert v_{2}\right\Vert _{H^{2}\left( \Omega \right) }\right\} $, by
the semigroup theory, for every $\left( u_{0},u_{1}\right) \in $ $H_{0}^{1}\left(
\Omega \right) \times H^{-1}\left( \Omega \right) ,$ the problem (3.8) has a
unique local weak solution $(u,u_{t})\in C\left( [0,T_{\max
});H_{0}^{1}\left( \Omega \right) \times H^{-1}\left( \Omega \right) \right)
$. Moreover, if $\left( u_{0},u_{1}\right) \in \left( H^{3}\left( \Omega
\right) \cap H_{0}^{2}\left( \Omega \right) \right) \times H_{0}^{1}\left(
\Omega \right) $, then $(u,u_{t})$ is a strong solution of (3.8) and
consequently of (3.1)-(3.3), from the class $C\left( [0,T_{\max });\left(
H^{3}\left( \Omega \right) \cap H_{0}^{2}\left( \Omega \right) \right)
\times H_{0}^{1}\left( \Omega \right) \right) $.

Let $u\in C\left( [0,T_{\max });H^{3}\left( \Omega \right) \cap
H_{0}^{2}\left( \Omega \right) \right) \cap C^{1}\left( [0,T_{\max
});H_{0}^{1}\left( \Omega \right) \right) $ be local strong solution of
(3.1)-(3.3). Testing (3.1) by $u_{t}$, we get
\begin{equation*}
E\left( u\left( t\right) ,u_{t}\left( t\right) \right)
+\int\limits_{s}^{t}\int\limits_{\Omega }\alpha \left( x\right) \left\vert
u_{t}\left( \tau ,x\right) \right\vert ^{2}dxd\tau
\end{equation*}%
\begin{equation}
\leq E\left( u\left( s\right) ,u_{t}\left( s\right) \right) \text{, \ \ \ }%
t\geq s\geq 0\text{,}  \tag{3.9}
\end{equation}%
where $E\left( u\left( t\right) ,u_{t}\left( t\right) \right) =\frac{1}{2}%
\left( \left\Vert u_{t}\left( t\right) \right\Vert _{L^{2}\left( \Omega
\right) }^{2}+\left\Vert \Delta u\left( t\right) \right\Vert _{L^{2}\left(
\Omega \right) }^{2}\right) +F_{1}\left( \left\Vert \nabla u\left( t\right)
\right\Vert _{L^{2}\left( \Omega \right) }^{2}\right) $\newline
$+F_{2}\left( \left\Vert u\left( t\right) \right\Vert _{L^{2}\left( \Omega
\right) }^{2}\right) +\frac{1}{2}\int\limits_{\Omega }\beta \left(
x_{1}\right) \left\vert u\left( t,x\right) \right\vert ^{2}dx$, $F_{1}\left(
z\right) =\int\limits_{0}^{z}f_{1}\left( \sqrt{\tau }\right) d\tau $ and $%
F_{2}\left( z\right) =\int\limits_{0}^{z}f_{2}\left( \sqrt{\tau }\right)
d\tau $. Considering (3.4), (3.6) and (3.7) in (3.9), we obtain%
\begin{equation}
\left\Vert \left( u\left( t\right) ,u_{t}\left( t\right) \right) \right\Vert
_{H_{0}^{2}\left( \Omega \right) \times L^{2}\left( \Omega \right) }\leq
c\left( \left\Vert \left( u_{0},u_{1}\right) \right\Vert _{H_{0}^{2}\left(
\Omega \right) \times L^{2}\left( \Omega \right) }\right) \text{, \ \ \ }%
t\geq 0\text{,}  \tag{3.10}
\end{equation}%
where $c:%
\mathbb{R}
^{+}\rightarrow $ $%
\mathbb{R}
^{+}$ is a nondecreasing function. The last inequality yields that the local
solution $u$ can be extended to $[0,\infty )$.

Now, let $v,w\in C\left( [0,\infty );H^{3}\left( \Omega \right) \cap
H_{0}^{2}\left( \Omega \right) \right) \cap C^{1}\left( [0,\infty
);H_{0}^{1}\left( \Omega \right) \right) $ be strong solutions of
(3.1)-(3.3) with initial data $\left( v_{0},v_{1}\right) \in \left(
H^{3}\left( \Omega \right) \cap H_{0}^{2}\left( \Omega \right) \right)
\times H_{0}^{1}\left( \Omega \right) $ and $\left( w_{0},w_{1}\right) \in
\left( H^{3}\left( \Omega \right) \cap H_{0}^{2}\left( \Omega \right)
\right) \times H_{0}^{1}\left( \Omega \right) $. Putting $v$ and $w$ instead
of $u$ in (3.1), subtracting the equations and testing the obtained equation
by $\left( v_{t}-w_{t}\right) $, we find%
\begin{equation*}
\left\Vert v(t)-w(t)\right\Vert _{H_{0}^{2}(\Omega )}+\left\Vert
v_{t}(t)-w_{t}(t)\right\Vert _{L^{2}(\Omega )}\leq
\end{equation*}%
\begin{equation*}
\leq \widetilde{c}(T,\widetilde{r})\left( \left\Vert v(0)-w(0)\right\Vert
_{H_{0}^{2}(\Omega )}+\left\Vert v_{t}(0)-w_{t}(0)\right\Vert _{L^{2}(\Omega
)}\right) \text{, \ }\forall t\in \lbrack 0,T]\text{,}
\end{equation*}%
where $\widetilde{c}:R_{+}\times R_{+}\rightarrow R_{+}$ \ is a
nondecreasing function with respect to each variable and $\widetilde{r}=\max
\left\{ \left\Vert (v_{0},v_{1})\right\Vert _{H_{0}^{2}\left( \Omega \right)
\times L^{2}\left( \Omega \right) },\left\Vert (w_{0},w_{1})\right\Vert
_{H_{0}^{2}\left( \Omega \right) \times L^{2}\left( \Omega \right) }\right\}
$. The last inequality, together with (3.10), implies that for every $\left(
u_{0},u_{1}\right) \in H_{0}^{2}\left( \Omega \right) \times L^{2}\left(
\Omega \right) $, problem (3.1)-(3.3) has a unique weak solution $u\in
C\left( [0,\infty );H_{0}^{2}\left( \Omega \right) \right) \cap C^{1}\left(
[0,\infty );L^{2}\left( \Omega \right) \right) $, which depends continuously on the 
initial data. Therefore, the problem
(3.1)-(3.3) generates a strongly continuous semigroup $\left\{ S\left(
t\right) \right\} _{t\geq 0}$ in $H_{0}^{2}\left( \Omega \right) \times
L^{2}\left( \Omega \right) $ by the formula $\left( u\left( t\right)
,u_{t}\left( t\right) \right) =S\left( t\right) (u_{0},u_{1})$.

In this section, our aim is to study the existence of the global attractor
of the semigroup $\left\{ S\left( t\right) \right\} _{t\geq 0}$. The
attractors for the semilinear plate equations were studied by many authors
under different conditions. We refer to [13-18] and therein references. The
main difficulty in the proof of the existence of a global attractor for the
plate equations with the localized damping is to show the existence of a
bounded absorbing set, which is equivalent to point dissipativity for the
asymptotically compact semigroups (see [19]). The validity of Theorem 1.1
allows us to overcome this difficulty for (3.1) and we show that the
semigroup $\left\{ S\left( t\right) \right\} _{t\geq 0}$ generated by the
problem (3.1)-(3.3) has a global attractor which equals the unstable
manifold of the set of stationary points. To this end, at first, we prove
the asymptotic compactness of $\left\{ S\left( t\right) \right\} _{t\geq 0}$
in $H_{0}^{2}\left( \Omega \right) \times L^{2}\left( \Omega \right) $.

\begin{lemma}
Let conditions (3.4)-(3.6) hold and $B$ be a bounded subset of $%
H_{0}^{2}\left( \Omega \right) \times L^{2}\left( \Omega \right) $. Then
every sequence of the form $\left\{ S\left( t_{k}\right) \varphi
_{k}\right\} _{k=1}^{\infty }$, where $\left\{ \varphi _{k}\right\}
_{k=1}^{\infty }\subset B$, $t_{k}\rightarrow \infty $ , has a convergent
subsequence in $H_{0}^{2}\left( \Omega \right) \times L^{2}\left( \Omega
\right) $.
\end{lemma}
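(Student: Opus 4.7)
The plan is to combine the uniform energy bound (3.10) with a subsequence-and-weak-convergence argument (Ball's energy method), exploiting the monotonicity of the Lyapunov functional $E$ in (3.9). By (3.10), $\{S(t_k)\varphi_k\}$ is bounded in $H_0^2(\Omega)\times L^2(\Omega)$. Shift time by $t_k$: set $u_k(s,x):=[S(t_k+s)\varphi_k](x)$, defined for $s\ge -t_k$ and uniformly bounded in $L^\infty(-T,T;H_0^2(\Omega))\cap W^{1,\infty}(-T,T;L^2(\Omega))$ on each fixed interval $[-T,T]$ for $k$ large enough. A diagonal extraction with weak-$*$ compactness, together with Aubin--Lions/Rellich compactness, yields a subsequence with $u_k\rightharpoonup u$ weakly-$*$ and $u_k\to u$ strongly in $C_{loc}(\mathbb{R};H_0^1(\Omega))$. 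Since $f_1,f_2$ act only on the scalar norms $\|\nabla u_k(s)\|_{L^2}$ and $\|u_k(s)\|_{L^2}$, which converge by Rellich, the nonlinear terms pass to the limit and $u$ is a weak solution of (3.1)--(3.2) on all of $\mathbb{R}\times\Omega$.

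\textbf{Reduction to energy convergence.} Because $E(u_k(\cdot))$ is monotone non-increasing and bounded, Helly's selection theorem produces a further subsequence along which $E(u_k(s))\to\mathcal{E}(s)$ pointwise, with $\mathcal{E}$ non-increasing. Weak lower semicontinuity of the quadratic part of $E$, combined with $H^1$-continuity of the $F_1$, $F_2$ and $\beta$-terms, gives $E(u(s))\le\mathcal{E}(s)$. The desired strong convergence of $u_k(0)=S(t_k)\varphi_k$ in $H_0^2\times L^2$ is then equivalent to the vanishing of the defect $d(s):=\mathcal{E}(s)-E(u(s))\ge 0$ at $s=0$: together with the already-established weak convergence and the compactness of the lower-order terms, $d(0)=0$ forces norm convergence.

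\textbf{Main obstacle: closing the defect.} Passing to the liminf in the identity $E(u_k(s))-E(u_k(t))=\int_s^t\!\int_\Omega\alpha\,|u_{k,\tau}|^2\,dx\,d\tau$ and using weak lower semicontinuity of the dissipation yields $\mathcal{E}(s)-\mathcal{E}(t)\ge E(u(s))-E(u(t))$ for $s<t$, so $d$ is non-increasing. The core difficulty is to force $d(0)=0$, and I expect this is the real work of the lemma. The natural route is a local observability/multiplier estimate on long intervals $[s,s+T]$: testing the equation for the difference $w_k:=u_k-u$ against $w_{k,t}$ and against a multiplier $m\cdot\nabla w_k$ adapted to the damping region $\omega$, which by (3.5) is a neighbourhood of $\partial\Omega$, should, modulo lower-order compact remainders that vanish under the strong $H^1$-convergence established above, bound the full $H_0^2\times L^2$-energy of $w_k$ by the localized dissipation $\int_s^{s+T}\!\int_\omega\alpha\,|w_{k,t}|^2\,dx\,d\tau$. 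Combining such an estimate with the integrability $\int_{-\infty}^{0}\!\int_\Omega\alpha\,|u_\tau|^2\,dx\,d\tau<\infty$ (valid because $u$ is a bounded complete trajectory) and sending $s\to-\infty$ will pin the monotone quantity $d$ to zero, giving strong convergence. The geometric assumption (3.5), placing the damping in a full neighbourhood of $\partial\Omega$, is the structural input that makes this observability step feasible; without it, the present method would fail.
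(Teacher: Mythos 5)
Your setup (uniform bound from (3.10), time shift, weak-$*$ extraction, Aubin--Lions giving strong convergence in $C_{loc}(\mathbb{R};H_0^1)$, passage to the limit in $f_1,f_2$ through the scalar norms, monotonicity of $E$ and the defect function $d$) is sound and matches the preparatory part of the paper's argument. But the proof stops exactly where the lemma begins: the entire analytical content is the estimate you describe in one sentence as something that ``should \dots bound the full $H_0^2\times L^2$-energy of $w_k$ by the localized dissipation.'' You correctly identify this as ``the real work,'' but you do not carry it out, so there is a genuine gap. Worse, the estimate you are reaching for --- a true observability inequality, full difference energy controlled by $\int\!\!\int_\omega \alpha |w_{k,t}|^2$ up to compact remainders --- is \emph{stronger} than what the paper proves, and it is not clear it is available for the biharmonic operator with damping supported only in a neighbourhood of $\partial\Omega$ without further geometric hypotheses.

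What the paper actually establishes is weaker but sufficient, and the mechanism is different from your defect argument. Working with the difference $v_m-v_l$ of two trajectories, it first tests with $\eta^2(v_m-v_l)$, where $\eta$ is a cutoff equal to $1$ on a smaller boundary neighbourhood $\widetilde\omega$ and supported in $\omega$; since by (3.5) and the energy identity the localized kinetic dissipation $\int_0^T\|v_{mt}\|^2_{L^2(\omega)}dt$ is bounded \emph{uniformly in} $T$, this yields (3.14): $\limsup_m\limsup_l\int_0^T\|v_m-v_l\|^2_{H^2(\widetilde\omega)}dt\le c_2$ with $c_2$ independent of $T$. It then uses the Morawetz/Pohozaev-type multiplier $\sum_i q_i(x)(v_m-v_l)_{x_i}+\tfrac{n-1}{2}(v_m-v_l)$, with $q_i\equiv x_i$ off $\widetilde\omega$, to control the time-integrated difference energy on $\Omega\setminus\widetilde\omega$ by the boundary-region quantity just bounded, plus endpoint and lower-order terms; the outcome (3.16) is a bound on $\int_0^T[\|v_m-v_l\|^2_{H_0^2}+\|v_{mt}-v_{lt}\|^2_{L^2}]dt$ that is uniform in $T$. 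Finally, the multiplier $2t(v_{mt}-v_{lt})$ converts this uniform-in-$T$ bound on the time integral into pointwise decay of the endpoint energy, $\|S(T+\cdot)\varphi_{k_m}-S(T+\cdot)\varphi_{k_l}\|^2\lesssim 1/T$ in the double $\limsup$, which is a quantitative Cauchy criterion and finishes the proof with no defect function at all. If you want to salvage your route, you must either prove the observability inequality you invoke (which is the hard open end of your argument) or replace it by the paper's two multiplier identities and the $t$-weighted energy identity; as written, the proposal does not constitute a proof.
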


\begin{proof}
Since $\left\{ \varphi _{k}\right\} _{k=1}^{\infty }$ is bounded in $%
H_{0}^{2}\left( \Omega \right) \times L^{2}\left( \Omega \right) $, by using
(3.4) and (3.10), we obtain that $\left\{ S\left( \cdot \right) \varphi
_{k}\right\} _{k=1}^{\infty }$ is bounded in $L^{\infty }\left( 0,\infty
;H_{0}^{2}\left( \Omega \right) \times L^{2}\left( \Omega \right) \right) $.
Then, for any $T_{0}\geq 0$ there exists a subsequence $\left\{
k_{m}\right\} _{m=1}^{\infty }$ such that $t_{k_{m}}\geq T_{0}$ and%
\begin{equation}
\left\{
\begin{array}{c}
S\left( t_{k_{m}}-T_{0}\right) \varphi _{k_{m}}\rightarrow \varphi _{0}\text{
weakly in }H_{0}^{2}\left( \Omega \right) \times L^{2}\left( \Omega \right)
\text{,} \\
v_{m}\rightarrow v\text{ weakly star in }L^{\infty }\left( 0,\infty
;H_{0}^{2}\left( \Omega \right) \right) \text{,} \\
v_{mt}\rightarrow v_{t}\text{ weakly star in }L^{\infty }\left( 0,\infty
;L^{2}\left( \Omega \right) \right) \text{,} \\
v_{m}\left( t\right) \rightarrow v\left( t\right) \text{ weakly in }%
H_{0}^{2}\left( \Omega \right) \text{, }\forall t\geq 0\text{,}%
\end{array}%
\right.  \tag{3.11}
\end{equation}%
for some $\varphi _{0}\in H_{0}^{2}\left( \Omega \right) \times L^{2}\left(
\Omega \right) $ and $v\in L^{\infty }\left( 0,\infty ;H_{0}^{2}\left(
\Omega \right) \right) \cap W^{1,\infty }\left( 0,\infty ;L^{2}\left( \Omega
\right) \right) $, where $\left( v_{m}(t),v_{mt}(t)\right)
=S(t+t_{k_{m}}-T_{0})\varphi _{k_{m}}$. By using (3.5) and (3.10), we also
obtain%
\begin{equation}
\int\limits_{0}^{T}\left\Vert v_{mt}\left( t\right) \right\Vert
_{L^{2}\left( \omega \right) }^{2}dt\leq c_{1}\text{, \ \ \ }\forall T\geq 0%
\text{.\ }  \tag{3.12}
\end{equation}%
By using (3.1), we have the following equation
\begin{equation*}
v_{mtt}(t,x)-v_{ltt}(t,x)+\Delta ^{2}(v_{m}(t,x)-v_{l}(t,x))+\alpha
(x)(v_{mt}(t,x)-v_{lt}(t,x))
\end{equation*}%
\begin{equation*}
-f_{1}(\left\Vert \nabla v_{m}\left( t\right) \right\Vert _{L^{2}\left(
\Omega \right) })\Delta v_{m}(t,x)+f_{1}(\left\Vert \nabla v_{l}\left(
t\right) \right\Vert _{L^{2}\left( \Omega \right) })\Delta v_{l}(t,x)+\beta
(x_{1})(v_{m}(t,x)-v_{l}(t,x))
\end{equation*}%
\begin{equation}
+f_{2}(\left\Vert v_{m}\left( t\right) \right\Vert _{L^{2}\left( \Omega
\right) })v_{m}(t,x)-f_{2}(\left\Vert v_{l}\left( t\right) \right\Vert
_{L^{2}\left( \Omega \right) })v_{l}(t,x)=0\text{.}  \tag{3.13}
\end{equation}%
\newline
Now, let $\widetilde{\omega }\subset \Omega $ be also a neighbourhood of the
boundary $\partial \Omega $ such that $\widetilde{\omega }\subset \omega $
and $\Omega \backslash \omega \Subset \Omega \backslash \widetilde{\omega }$%
. Let $\eta \in C^{2}\left( \overline{\Omega }\right) $, $0\leq \eta \left(
x\right) \leq 1$, $\eta |_{\widetilde{\omega }}\equiv 1$ and $\eta |_{\Omega
\backslash \omega }\equiv 0$. Testing the equation (3.13) by $\eta
^{2}\left( v_{m}-v_{l}\right) $ on $\left( 0,T\right) \times \Omega $, using
integration by parts, and considering $\eta |_{\Omega \backslash \omega
}\equiv 0$ and $\beta (\cdot )\geq 0$, we get
\begin{equation*}
\int\limits_{0}^{T}\left\Vert \eta \Delta \left( v_{m}\left( t\right)
-v_{l}\left( t\right) \right) \right\Vert _{L^{2}\left( \omega \right)
}^{2}dt\leq \int\limits_{0}^{T}\left\Vert \eta \left( v_{mt}\left( t\right)
-v_{lt}\left( t\right) \right) \right\Vert _{L^{2}\left( \omega \right)
}^{2}dt
\end{equation*}%
\begin{equation*}
-\left. \left( \int\limits_{\omega }\left( v_{mt}\left( t,x\right)
-v_{lt}(\left( t,x\right) \right) \eta ^{2}\left( x\right) \left(
v_{m}\left( t,x\right) -v_{l}(\left( t,x\right) \right) dx\right)
\right\vert _{0}^{T}
\end{equation*}%
\begin{equation*}
-\int\limits_{0}^{T}\int\limits_{\omega }\Delta \left( v_{m}\left(
t,x\right) -v_{l}(\left( t,x\right) \right) \Delta \left( \eta ^{2}\left(
x\right) \right) \left( v_{m}\left( t,x\right) -v_{l}(\left( t,x\right)
\right) dxdt
\end{equation*}%
\begin{equation*}
-4\sum\limits_{i=1}^{n}\int\limits_{0}^{T}\int\limits_{\omega }\Delta \left(
v_{m}\left( t,x\right) -v_{l}(\left( t,x\right) \right) \eta \left( x\right)
\eta _{x_{i}}\left( x\right) \left( v_{m}\left( t,x\right) -v_{l}(\left(
t,x\right) \right) _{x_{i}}dxdt
\end{equation*}%
\begin{equation*}
-\int\limits_{0}^{T}\int\limits_{\omega }\alpha \left( x\right) \left(
v_{mt}\left( t,x\right) -v_{lt}\left( t,x\right) \right) \eta ^{2}\left(
x\right) \left( v_{m}\left( t,x\right) -v_{l}\left( t,x\right) \right) dxdt
\end{equation*}%
\begin{equation*}
+\int\limits_{0}^{T}\int\limits_{\omega }f_{1}\left( \left\Vert \nabla
v_{m}\left( t\right) \right\Vert _{L^{2}\left( \Omega \right) }\right)
\Delta v_{m}\left( t,x\right) \eta ^{2}\left( x\right) \left( v_{m}\left(
t,x\right) -v_{l}\left( t,x\right) \right) dxdt
\end{equation*}%
\begin{equation*}
-\int\limits_{0}^{T}\int\limits_{\omega }f_{1}\left( \left\Vert \nabla
v_{l}\left( t\right) \right\Vert _{L^{2}\left( \Omega \right) }\right)
\Delta v_{l}\left( t,x\right) \eta ^{2}\left( x\right) \left( v_{m}\left(
t,x\right) -v_{l}\left( t,x\right) \right) dxdt
\end{equation*}%
\begin{equation*}
+\int\limits_{0}^{T}\int\limits_{\omega }f_{2}\left( \left\Vert v_{m}\left(
t\right) \right\Vert _{L^{2}\left( \Omega \right) }\right) v_{m}\left(
t,x\right) \eta ^{2}\left( x\right) \left( v_{m}\left( t,x\right)
-v_{l}\left( t,x\right) \right) dxdt
\end{equation*}%
\begin{equation*}
-\int\limits_{0}^{T}\int\limits_{\omega }f_{2}\left( \left\Vert v_{l}\left(
t\right) \right\Vert _{L^{2}\left( \Omega \right) }\right) v_{l}\left(
t,x\right) \eta ^{2}\left( x\right) \left( v_{m}\left( t,x\right)
-v_{l}\left( t,x\right) \right) dxdt\text{.}
\end{equation*}%
Considering $\eta |_{\widetilde{\omega }}\equiv 1$ in the above inequality
and using (3.5), (3.7), (3.10) and (3.12), we have%
\begin{equation*}
\int\limits_{0}^{T}\left\Vert \Delta \left( v_{m}(t)-v_{l}(t)\right)
\right\Vert _{L^{2}\left( \widetilde{\omega }\right) }^{2}dt\leq
c_{2}+c_{3}T\left\Vert v_{m}-v_{l}\right\Vert _{C\left( \left[ 0,T\right]
;H_{0}^{1}\left( \Omega \right) \right) }\text{.}
\end{equation*}%
Since the sequence $\left\{ v_{m}\right\} _{m=1}^{\infty }$ is bounded in $%
L^{\infty }\left( 0,T;H_{0}^{2}\left( \Omega \right) \right) $ and the
sequence $\left\{ v_{mt}\right\} _{m=1}^{\infty }$ is bounded in $L^{\infty
}\left( 0,T;L^{2}\left( \Omega \right) \right) $, by the compact embedding
theorem (see [20, Corollary 4]), $\left\{ v_{m}\right\} _{m=1}^{\infty }$ $\
$is relatively compact in $C\left( \left[ 0,T\right] ;H_{0}^{1}\left( \Omega
\right) \right) $. So according to (3.11), the sequence $\left\{
v_{m}\right\} _{m=1}^{\infty }$ strongly converges to $v$ in $C\left( \left[
0,T\right] ;H_{0}^{1}\left( \Omega \right) \right) $. Then, by the last
inequality, we obtain,
\begin{equation}
\limsup\limits_{m\rightarrow \infty }\limsup\limits_{l\rightarrow \infty
}\int\limits_{0}^{T}\left\Vert v_{m}\left( t\right) -v_{l}\left( t\right)
\right\Vert _{H^{2}\left( \widetilde{\omega }\right) }^{2}dt\leq c_{2}\text{%
, \ \ }\forall T\geq 0\text{.}  \tag{3.14}
\end{equation}%
Now, let $q_{i}\in C_{0}^{2}\left( \Omega \right) $, $q_{i}|_{\Omega
\backslash \widetilde{\omega }}\equiv x_{i}$, $i=1,..,n$. Testing (3.13) by
\newline
$\sum\limits_{i=1}^{n}q_{i}\left( x\right) \left( v_{m}-v_{l}\right)
_{x_{i}}+\frac{1}{2}\left( n-1\right) \left( v_{m}-v_{l}\right) $ on $\left(
0,T\right) \times \Omega $ and using integration by parts, we get
\begin{equation*}
\sum\limits_{i=1}^{n}\left. \left( \int\limits_{\Omega }\left( v_{mt}\left(
t,x\right) -v_{lt}\left( t,x\right) \right) q_{i}\left( x\right) \left(
v_{m}\left( t,x\right) -v_{l}\left( t,x\right) \right) _{x_{i}}dx\right)
\right\vert _{0}^{T}
\end{equation*}%
\begin{equation*}
+\frac{1}{2}\sum\limits_{i=1}^{n}\int\limits_{0}^{T}\int\limits_{\Omega
}\left( q_{i}\left( x\right) \right) _{x_{i}}\left( v_{mt}\left( t,x\right)
-v_{lt}\left( t,x\right) \right) ^{2}dxdt
\end{equation*}%
\begin{equation*}
+\frac{1}{2}\left( n-1\right) \left. \left( \int\limits_{\Omega }\left(
v_{mt}\left( t,x\right) -v_{lt}\left( t,x\right) \right) \left( v_{m}\left(
t,x\right) -v_{l}\left( t,x\right) \right) dx\right) \right\vert _{0}^{T}
\end{equation*}%
\begin{equation*}
-\frac{1}{2}\left( n-1\right) \int\limits_{0}^{T}\left\Vert v_{mt}\left(
t\right) -v_{lt}\left( t\right) \right\Vert _{L^{2}\left( \Omega \right)
}^{2}dt
\end{equation*}%
\begin{equation*}
+\sum\limits_{i=1}^{n}\int\limits_{0}^{T}\int\limits_{\Omega }\left( \Delta
v_{m}\left( t,x\right) -\Delta v_{l}\left( t,x\right) \right) \Delta \left(
q_{i}\left( x\right) \right) \left( v_{m}\left( t,x\right) -v_{l}\left(
t,x\right) \right) _{x_{i}}dxdt
\end{equation*}%
\begin{equation*}
+2\sum\limits_{i,j=1}^{n}\int\limits_{0}^{T}\int\limits_{\Omega }\left(
\Delta v_{m}\left( t,x\right) -\Delta v_{l}\left( t,x\right) \right)
(q_{i}(x))_{x_{j}}\left( v_{m}\left( t,x\right) -v_{l}\left( t,x\right)
\right) _{x_{i}x_{j}}dxdt
\end{equation*}%
\begin{equation*}
-\frac{1}{2}\sum\limits_{i=1}^{n}\int\limits_{0}^{T}\int\limits_{\Omega
}(q_{i}\left( x\right) )_{x_{i}}\left( \Delta v_{m}\left( t,x\right) -\Delta
v_{l}\left( t,x\right) \right) ^{2}dxdt
\end{equation*}%
\begin{equation*}
+\frac{1}{2}\left( n-1\right) \int\limits_{0}^{T}\left\Vert \left( \Delta
v_{m}\left( t\right) -\Delta v_{l}\left( t\right) \right) \right\Vert
_{L^{2}\left( \Omega \right) }^{2}dt
\end{equation*}%
\begin{equation*}
+\sum\limits_{i=1}^{n}\int\limits_{0}^{T}\int\limits_{\Omega }\alpha \left(
x\right) \left( v_{mt}\left( t,x\right) -v_{lt}\left( t,x\right) \right)
q_{i}\left( x\right) \left( v_{m}\left( t,x\right) -v_{l}\left( t,x\right)
\right) _{x_{i}}dxdt
\end{equation*}%
\begin{equation*}
+\frac{1}{2}\left( n-1\right) \int\limits_{0}^{T}\int\limits_{\Omega }\alpha
\left( x\right) \left( v_{mt}\left( t,x\right) -v_{lt}\left( t,x\right)
\right) \left( v_{m}\left( t,x\right) -v_{l}\left( t,x\right) \right) dxdt
\end{equation*}%
\begin{equation*}
-\sum\limits_{i=1}^{n}\int\limits_{0}^{T}\int\limits_{\Omega }f_{1}\left(
\left\Vert \nabla v_{m}\left( t\right) \right\Vert _{L^{2}\left( \Omega
\right) }\right) \Delta v_{m}\left( t,x\right) q_{i}\left( x\right) \left(
v_{m}\left( t,x\right) -v_{l}\left( t,x\right) \right) _{x_{i}}dxdt
\end{equation*}%
\begin{equation*}
-\frac{1}{2}\left( n-1\right) \int\limits_{0}^{T}\int\limits_{\Omega
}f_{1}\left( \left\Vert \nabla v_{m}\left( t\right) \right\Vert
_{L^{2}\left( \Omega \right) }\right) \Delta v_{m}\left( t,x\right) \left(
v_{m}\left( t,x\right) -v_{l}\left( t,x\right) \right) dxdt
\end{equation*}%
\begin{equation*}
+\sum\limits_{i=1}^{n}\int\limits_{0}^{T}\int\limits_{\Omega }f_{1}\left(
\left\Vert \nabla v_{l}\left( t\right) \right\Vert _{L^{2}\left( \Omega
\right) }\right) \Delta v_{l}\left( t,x\right) q_{i}\left( x\right) \left(
v_{m}\left( t,x\right) -v_{l}\left( t,x\right) \right) _{x_{i}}dxdt
\end{equation*}%
\begin{equation*}
+\frac{1}{2}\left( n-1\right)
\sum\limits_{i=1}^{n}\int\limits_{0}^{T}\int\limits_{\Omega }f_{1}\left(
\left\Vert \nabla v_{l}\left( t\right) \right\Vert _{L^{2}\left( \Omega
\right) }\right) \Delta v_{l}\left( t,x\right) \left( v_{m}\left( t,x\right)
-v_{l}\left( t,x\right) \right) dxdt
\end{equation*}%
\begin{equation*}
+\sum\limits_{i=1}^{n}\int\limits_{0}^{T}\int\limits_{\Omega }f_{2}\left(
\left\Vert v_{m}\left( t\right) \right\Vert _{L^{2}\left( \Omega \right)
}\right) q_{i}\left( x\right) v_{m}\left( t,x\right) \left( v_{m}\left(
t,x\right) -v_{l}\left( t,x\right) \right) _{x_{i}}dxdt
\end{equation*}%
\begin{equation*}
+\frac{1}{2}\left( n-1\right) \int\limits_{0}^{T}\int\limits_{\Omega
}f_{2}\left( \left\Vert v_{m}\left( t\right) \right\Vert _{L^{2}\left(
\Omega \right) }\right) v_{m}\left( t,x\right) \left( v_{m}\left( t,x\right)
-v_{l}\left( t,x\right) \right) dxdt
\end{equation*}%
\begin{equation*}
-\sum\limits_{i=1}^{n}\int\limits_{0}^{T}\int\limits_{\Omega }f_{2}\left(
\left\Vert v_{l}\left( t\right) \right\Vert _{L^{2}\left( \Omega \right)
}\right) q_{i}\left( x\right) v_{l}\left( t,x\right) \left( v_{m}\left(
t,x\right) -v_{l}\left( t,x\right) \right) _{x_{i}}dxdt
\end{equation*}%
\begin{equation*}
-\frac{1}{2}\left( n-1\right) \int\limits_{0}^{T}\int\limits_{\Omega
}f_{2}\left( \left\Vert v_{l}\left( t\right) \right\Vert _{L^{2}\left(
\Omega \right) }\right) v_{l}\left( t,x\right) \left( v_{m}\left( t,x\right)
-v_{l}\left( t,x\right) \right) dxdt
\end{equation*}%
\begin{equation*}
+\sum\limits_{i=1}^{n}\int\limits_{0}^{T}\int\limits_{\Omega }\beta
(x_{1})(v_{m}(t,x)-v_{l}(t,x))q_{i}\left( x\right) \left( v_{m}\left(
t,x\right) -v_{l}\left( t,x\right) \right) _{x_{i}}dxdt\leq 0.
\end{equation*}%
Then, since $q_{i}|_{\Omega \backslash \widetilde{\omega }}\equiv x_{i}$,
from the last inequality, we find,%
\begin{equation*}
\frac{3}{2}\int\limits_{0}^{T}\left\Vert \left( \Delta v_{m}\left( t\right)
-\Delta v_{l}\left( t\right) \right) \right\Vert _{L^{2}\left( \Omega
\backslash \widetilde{\omega }\right) }^{2}dt+\frac{1}{2}\int\limits_{0}^{T}%
\left\Vert v_{mt}\left( t\right) -v_{lt}\left( t\right) \right\Vert
_{L^{2}\left( \Omega \backslash \widetilde{\omega }\right) }^{2}dt
\end{equation*}%
\begin{equation*}
\leq -\left. \left( \sum\limits_{i=1}^{n}\int\limits_{\Omega }\left(
v_{mt}\left( t,x\right) -v_{lt}\left( t,x\right) \right) q_{i}\left(
x\right) \left( v_{m}\left( t,x\right) -v_{l}\left( t,x\right) \right)
_{x_{i}}dx\right) \right\vert _{0}^{T}
\end{equation*}%
\begin{equation*}
-\frac{1}{2}\sum\limits_{i=1}^{n}\int\limits_{0}^{T}\int\limits_{\widetilde{%
\omega }}\left( q_{i}\left( x\right) \right) _{x_{i}}\left( v_{mt}\left(
t,x\right) -v_{lt}\left( t,x\right) \right) ^{2}dxdt
\end{equation*}%
\begin{equation*}
-\frac{1}{2}(n-1)\left. \left( \int\limits_{\Omega }(\left( v_{mt}\left(
t\right) -v_{lt}\left( t\right) \right) \left( v_{m}\left( t,x\right)
-v_{l}\left( t,x\right) \right) dx\right) \right\vert _{0}^{T}
\end{equation*}%
\begin{equation*}
+\frac{1}{2}(n-1)\int\limits_{0}^{T}\left\Vert v_{mt}\left( t\right)
-v_{lt}\left( t\right) \right\Vert _{L^{2}\left( \widetilde{\omega }\right)
}^{2}dt
\end{equation*}%
\begin{equation*}
-\sum\limits_{i=1}^{n}\int\limits_{0}^{T}\int\limits_{\Omega }\Delta \left(
v_{m}\left( t,x\right) -v_{l}\left( t,x\right) \right) \Delta \left(
q_{i}\left( x\right) \right) \left( v_{m}\left( t,x\right) -v_{l}\left(
t,x\right) \right) _{x_{i}}dxdt
\end{equation*}%
\begin{equation*}
-2\sum\limits_{i,j=1}^{n}\int\limits_{0}^{T}\int\limits_{\widetilde{\omega }%
}\Delta (v_{m}(t,x)-v_{l}(t,x))(q_{i}(x))_{x_{j}}\left( v_{m}\left(
t,x\right) -v_{l}\left( t,x\right) \right) _{x_{i}x_{j}}dxdt
\end{equation*}%
\begin{equation*}
+\frac{1}{2}\sum\limits_{i=1}^{n}\int\limits_{0}^{T}\int\limits_{\widetilde{%
\omega }}\left( q_{i}\left( x\right) \right) _{x_{i}}\left( \Delta \left(
v_{m}\left( t,x\right) -v_{l}\left( t,x\right) \right) \right) ^{2}dxdt
\end{equation*}%
\begin{equation*}
-\frac{1}{2}\left( n-1\right) \int\limits_{0}^{T}\left\Vert \Delta \left(
v_{m}\left( t\right) -v_{l}\left( t\right) \right) \right\Vert _{L^{2}\left(
\widetilde{\omega }\right) }^{2}dt
\end{equation*}%
\begin{equation*}
-\sum\limits_{i=1}^{n}\int\limits_{0}^{T}\int\limits_{\Omega }\alpha \left(
x\right) \left( v_{mt}\left( t,x\right) -v_{lt}\left( t,x\right) \right)
q_{i}\left( x\right) \left( v_{m}\left( t,x\right) -v_{l}\left( t,x\right)
\right) _{x_{i}}dxdt
\end{equation*}%
\begin{equation*}
-\frac{1}{2}(n-1)\int\limits_{0}^{T}\int\limits_{\Omega }\alpha \left(
x\right) (v_{mt}\left( t,x\right) -v_{lt}\left( t,x\right) )(v_{m}\left(
t,x\right) -v_{l}\left( t,x\right) )dxdt
\end{equation*}%
\begin{equation*}
+\sum\limits_{i=1}^{n}\int\limits_{0}^{T}\int\limits_{\Omega }f_{1}\left(
\left\Vert \nabla v_{m}\left( t\right) \right\Vert _{L^{2}\left( \Omega
\right) }\right) \Delta v_{m}\left( t,x\right) q_{i}\left( x\right) \left(
v_{m}\left( t,x\right) -v_{l}\left( t,x\right) \right) _{x_{i}}dxdt
\end{equation*}%
\begin{equation*}
+\frac{1}{2}(n-1)\int\limits_{0}^{T}\int\limits_{\Omega }f_{1}\left(
\left\Vert \nabla v_{m}\left( t\right) \right\Vert _{L^{2}\left( \Omega
\right) }\right) \Delta v_{m}\left( t,x\right) \left( v_{m}\left( t,x\right)
-v_{l}\left( t,x\right) \right) dxdt
\end{equation*}%
\begin{equation*}
-\sum\limits_{i=1}^{n}\int\limits_{0}^{T}\int\limits_{\Omega }f_{1}\left(
\left\Vert \nabla v_{l}\left( t\right) \right\Vert _{L^{2}\left( \Omega
\right) }\right) \Delta v_{l}\left( t,x\right) q_{i}\left( x\right) \left(
v_{m}\left( t,x\right) -v_{l}\left( t,x\right) \right) _{x_{i}}dxdt
\end{equation*}%
\begin{equation*}
-\frac{1}{2}(n-1)\int\limits_{0}^{T}\int\limits_{\Omega }f_{1}\left(
\left\Vert \nabla v_{l}\left( t\right) \right\Vert _{L^{2}\left( \Omega
\right) }\right) \Delta v_{l}\left( t,x\right) \left( v_{m}\left( t,x\right)
-v_{l}\left( t,x\right) \right) dxdt
\end{equation*}%
\begin{equation*}
-\sum\limits_{i=1}^{n}\int\limits_{0}^{T}\int\limits_{\Omega }f_{2}\left(
\left\Vert v_{m}\left( t\right) \right\Vert _{L^{2}\left( \Omega \right)
}\right) v_{m}\left( t,x\right) q_{i}\left( x\right) \left( v_{m}\left(
t,x\right) -v_{l}\left( t,x\right) \right) _{x_{i}}dxdt
\end{equation*}%
\begin{equation*}
-\frac{1}{2}(n-1)\int\limits_{0}^{T}\int\limits_{\Omega }f_{2}\left(
\left\Vert v_{m}\left( t\right) \right\Vert _{L^{2}\left( \Omega \right)
}\right) v_{m}\left( t,x\right) \left( v_{m}\left( t,x\right) -v_{l}\left(
t,x\right) \right) dxdt
\end{equation*}%
\begin{equation*}
+\sum\limits_{i=1}^{n}\int\limits_{0}^{T}\int\limits_{\Omega }f_{2}\left(
\left\Vert v_{l}\left( t\right) \right\Vert _{L^{2}\left( \Omega \right)
}\right) \left( v_{l}\left( t,x\right) \right) q_{i}\left( x\right) \left(
v_{m}\left( t,x\right) -v_{l}\left( t,x\right) \right) _{x_{i}}dxdt
\end{equation*}%
\begin{equation*}
+\frac{1}{2}\left( n-1\right) \int\limits_{0}^{T}\int\limits_{\Omega
}f_{2}\left( \left\Vert v_{l}\left( t\right) \right\Vert _{L^{2}\left(
\Omega \right) }\right) v_{l}\left( t,x\right) \left( v_{m}\left( t,x\right)
-v_{l}\left( t,x\right) \right) dxdt
\end{equation*}%
\begin{equation*}
-\sum\limits_{i=1}^{n}\int\limits_{0}^{T}\int\limits_{\Omega }\beta
(x_{1})(v_{m}(t,x)-v_{l}(t,x))q_{i}\left( x\right) \left( v_{m}\left(
t,x\right) -v_{l}\left( t,x\right) \right) _{x_{i}}dxdt\text{.}
\end{equation*}%
Taking into account (3.4), (3.6), (3.7), (3.10) and (3.12) in the last
inequality, we get%
\begin{equation*}
\int\limits_{0}^{T}\left\Vert \Delta v_{m}\left( t\right) -\Delta
v_{l}\left( t\right) \right\Vert _{L^{2}\left( \Omega \backslash \widetilde{%
\omega }\right) }^{2}dt+\int\limits_{0}^{T}\left\Vert v_{mt}\left( t\right)
-v_{lt}\left( t\right) \right\Vert _{L^{2}\left( \Omega \backslash
\widetilde{\omega }\right) }^{2}dt
\end{equation*}%
\begin{equation*}
\leq c_{4}+c_{5}T\left\Vert v_{m}-v_{l}\right\Vert _{C\left( \left[ 0,T%
\right] ;H_{0}^{1}\left( \Omega \right) \right)
}+c_{6}\int\limits_{0}^{T}\left\Vert v_{m}\left( t\right) -v_{l}\left(
t\right) \right\Vert _{H^{2}\left( \widetilde{\omega }\right) }^{2}dt\text{,}
\end{equation*}%
which, together with (3.14) and strong convergence of $\left\{ v_{m}\right\}
_{m=1}^{\infty }$ in $C\left( \left[ 0,T\right] ;H_{0}^{1}\left( \Omega
\right) \right) $, yields%
\begin{equation*}
\limsup\limits_{m\rightarrow \infty }\limsup\limits_{l\rightarrow \infty
}\int\limits_{0}^{T}\left[ \left\Vert \Delta v_{m}\left( t\right) -\Delta
v_{l}\left( t\right) \right\Vert _{L^{2}(\Omega \backslash \widetilde{\omega
})}^{2}+\left\Vert v_{mt}\left( t\right) -v_{lt}\left( t\right) \right\Vert
_{L^{2}(\Omega \backslash \widetilde{\omega })}^{2}\right] dt
\end{equation*}%
\begin{equation}
\leq c_{7}\text{, \ \ \ \ }\forall T\geq 0\text{.}  \tag{3.15}
\end{equation}%
By (3.12), (3.14) and (3.15), we obtain%
\begin{equation*}
\limsup\limits_{m\rightarrow \infty }\limsup\limits_{l\rightarrow \infty
}\int\limits_{0}^{T}\left[ \left\Vert v_{m}\left( t\right) -v_{l}\left(
t\right) \right\Vert _{H_{0}^{2}\left( \Omega \right) }^{2}+\left\Vert
v_{mt}\left( t\right) -v_{lt}\left( t\right) \right\Vert _{L^{2}\left(
\Omega \right) }^{2}\right] dt
\end{equation*}%
\begin{equation}
\leq c_{8}\text{, \ \ \ \ }\forall T\geq 0\text{.}  \tag{3.16}
\end{equation}%
Now, multiplying (3.13) by $2t\left( v_{mt}-v_{lt}\right) $, integrating
over $\left( 0,T\right) \times \Omega $ and using integration by parts, we
have%
\begin{equation*}
T\left\Vert v_{mt}\left( T\right) -v_{lt}\left( T\right) \right\Vert
_{L^{2}\left( \Omega \right) }^{2}
\end{equation*}%
\begin{equation*}
-\int\limits_{0}^{T}\left\Vert v_{mt}\left( t\right) -v_{lt}\left( t\right)
\right\Vert _{L^{2}\left( \Omega \right) }^{2}dt+T\left\Vert \Delta
v_{m}(T)-\Delta v_{l}(T)\right\Vert _{L\left( \Omega \right)
}^{2}-\int\limits_{0}^{T}\left\Vert \Delta v_{m}(t)-\Delta
v_{l}(t)\right\Vert _{L^{2}\left( \Omega \right) }^{2}dt
\end{equation*}%
\begin{equation*}
+T\left( F_{1}\left( \left\Vert \nabla v_{m}\left( T\right) \right\Vert
_{L^{2}\left( \Omega \right) }^{2}\right) -\frac{1}{T}\int%
\limits_{0}^{T}F_{1}\left( \left\Vert \nabla v_{m}\left( t\right)
\right\Vert _{L^{2}\left( \Omega \right) }^{2}\right) dt\right)
\end{equation*}%
\begin{equation*}
+2\int\limits_{0}^{T}\int\limits_{\Omega }tf_{1}\left( \left\Vert \nabla
v_{m}\left( t\right) \right\Vert _{L^{2}\left( \Omega \right) }\right)
\Delta v_{m}\left( t,x\right) v_{lt}\left( t,x\right) dxdt
\end{equation*}%
\begin{equation*}
+2\int\limits_{0}^{T}\int\limits_{\Omega }tf_{1}\left( \left\Vert \nabla
v_{l}\left( t\right) \right\Vert _{L^{2}\left( \Omega \right) }\right)
\Delta v_{l}\left( t,x\right) v_{mt}\left( t,x\right) dxdt
\end{equation*}%
\begin{equation*}
+T\left( F_{1}\left( \left\Vert \nabla v_{l}\left( T\right) \right\Vert
_{L^{2}\left( \Omega \right) }^{2}\right) -\frac{1}{T}\int%
\limits_{0}^{T}F_{1}\left( \left\Vert \nabla v_{l}\left( t\right)
\right\Vert _{L^{2}\left( \Omega \right) }^{2}\right) dt\right)
\end{equation*}%
\begin{equation*}
+T\left( F_{2}\left( \left\Vert v_{m}\left( T\right) \right\Vert
_{L^{2}\left( \Omega \right) }^{2}\right) -\frac{1}{T}\int%
\limits_{0}^{T}F_{2}\left( \left\Vert v_{m}\left( t\right) \right\Vert
_{L^{2}\left( \Omega \right) }^{2}\right) dt\right)
\end{equation*}%
\begin{equation*}
-2\int\limits_{0}^{T}\int\limits_{\Omega }tf_{2}\left( \left\Vert
v_{m}\left( t\right) \right\Vert _{L^{2}\left( \Omega \right) }\right)
v_{m}\left( t,x\right) v_{lt}\left( t,x\right) dxdt
\end{equation*}%
\begin{equation*}
-2\int\limits_{0}^{T}\int\limits_{\Omega }tf_{2}\left( \left\Vert
v_{l}\left( t\right) \right\Vert _{L^{2}\left( \Omega \right) }\right)
v_{l}\left( t,x\right) v_{mt}\left( t,x\right) dxdt
\end{equation*}%
\begin{equation*}
+T\left( F_{2}\left( \left\Vert v_{l}\left( T\right) \right\Vert
_{L^{2}\left( \Omega \right) }^{2}\right) -\frac{1}{T}\int%
\limits_{0}^{T}F_{2}\left( \left\Vert v_{l}\left( t\right) \right\Vert
_{L^{2}\left( \Omega \right) }^{2}\right) dt\right)
\end{equation*}%
\begin{equation*}
-\int\limits_{0}^{T}\int\limits_{\Omega }\beta (x_{1})\left\vert
v_{m}(t,x)-v_{l}(t,x)\right\vert ^{2}dxdt=0\text{.}
\end{equation*}%
Hence, dividing both sides of the above inequality by $T$, we obtain%
\begin{equation*}
\left\Vert \left( \Delta v_{m}\left( T\right) -\Delta v_{l}\left( T\right)
\right) \right\Vert _{L^{2}\left( \Omega \right) }^{2}+\left\Vert
v_{mt}\left( T\right) -v_{lt}\left( T\right) \right\Vert _{L^{2}\left(
\Omega \right) }^{2}
\end{equation*}%
\begin{equation*}
\leq \frac{1}{T}\left( \int\limits_{0}^{T}\left\Vert v_{mt}\left( t\right)
-v_{lt}\left( t\right) \right\Vert _{L^{2}\left( \Omega \right)
}^{2}+\int\limits_{0}^{T}\left\Vert \Delta v_{m}\left( t\right) -\Delta
v_{l}\left( t\right) \right\Vert _{L^{2}\left( \Omega \right) }^{2}dt\right)
\end{equation*}%
\begin{equation*}
-F_{1}\left( \left\Vert \nabla v_{m}\left( T\right) \right\Vert
_{L^{2}\left( \Omega \right) }^{2}\right) +\frac{1}{T}\int%
\limits_{0}^{T}F_{1}\left( \left\Vert \nabla v_{m}\left( t\right)
\right\Vert _{L^{2}\left( \Omega \right) }^{2}\right) dt
\end{equation*}%
\begin{equation*}
-\frac{2}{T}\int\limits_{0}^{T}\int\limits_{\Omega }tf_{1}\left( \left\Vert
\nabla v_{m}\left( t\right) \right\Vert _{L^{2}\left( \Omega \right)
}\right) \Delta v_{m}\left( t,x\right) v_{lt}\left( t,x\right) dxdt
\end{equation*}%
\begin{equation*}
-\frac{2}{T}\int\limits_{0}^{T}\int\limits_{\Omega }tf_{1}\left( \left\Vert
\nabla v_{l}\left( t\right) \right\Vert _{L^{2}\left( \Omega \right)
}\right) \Delta v_{l}\left( t,x\right) v_{mt}\left( t,x\right) dxdt
\end{equation*}%
\begin{equation*}
-F_{1}\left( \left\Vert \nabla v_{l}\left( T\right) \right\Vert
_{L^{2}\left( \Omega \right) }^{2}\right) +\frac{1}{T}\int%
\limits_{0}^{T}F_{1}\left( \left\Vert \nabla v_{l}\left( t\right)
\right\Vert _{L^{2}\left( \Omega \right) }^{2}\right) dt
\end{equation*}%
\begin{equation*}
-F_{2}\left( \left\Vert v_{m}\left( T\right) \right\Vert _{L^{2}\left(
\Omega \right) }^{2}\right) +\frac{1}{T}\int\limits_{0}^{T}F_{2}\left(
\left\Vert v_{m}\left( t\right) \right\Vert _{L^{2}\left( \Omega \right)
}^{2}\right) dt
\end{equation*}%
\begin{equation*}
+\frac{2}{T}\int\limits_{0}^{T}\int\limits_{\Omega }tf_{2}\left( \left\Vert
v_{m}\left( t\right) \right\Vert _{L^{2}\left( \Omega \right) }\right)
v_{m}\left( t,x\right) v_{lt}\left( t,x\right) dxdt
\end{equation*}%
\begin{equation*}
+\frac{2}{T}\int\limits_{0}^{T}\int\limits_{\Omega }tf_{2}\left( \left\Vert
v_{l}\left( t\right) \right\Vert _{L^{2}\left( \Omega \right) }\right)
v_{l}\left( t,x\right) v_{mt}\left( t,x\right) dxdt-F_{2}\left( \left\Vert
v_{l}\left( T\right) \right\Vert _{L^{2}\left( \Omega \right) }^{2}\right)
\end{equation*}%
\begin{equation*}
+\frac{1}{T}\int\limits_{0}^{T}F_{2}\left( \left\Vert v_{l}\left( t\right)
\right\Vert _{L^{2}\left( \Omega \right) }^{2}\right) dt+\frac{1}{T}%
\int\limits_{0}^{T}\int\limits_{\Omega }\beta (x_{1})\left\vert
v_{m}(t,x)-v_{l}(t,x)\right\vert ^{2}dxdt\text{.}
\end{equation*}%
Since $\left\{ v_{m}\right\} _{m=1}^{\infty }$ strongly converges to $v$ in $%
C\left( \left[ 0,T\right] ;H_{0}^{1}\left( \Omega \right) \right) $ and $%
F_{1}$, $F_{2}\in C^{1}\left(
\mathbb{R}
^{+}\right) $, by considering (3.16) in the last inequality, we get
\begin{equation*}
\limsup\limits_{m\rightarrow \infty }\limsup\limits_{l\rightarrow \infty }
\left[ \left\Vert v_{m}\left( T\right) -v_{l}\left( T\right) \right\Vert
_{H_{0}^{2}\left( \Omega \right) }^{2}+\left\Vert v_{m}\left( T\right)
-v_{l}\left( T\right) \right\Vert _{L^{2}\left( \Omega \right) }^{2}\right]
\text{ }
\end{equation*}%
\begin{equation*}
\leq \frac{c_{9}}{T}-2F_{1}\left( \left\Vert \nabla v\left( T\right)
\right\Vert _{L^{2}\left( \Omega \right) }^{2}\right) +\frac{2}{T}%
\int\limits_{0}^{T}F_{1}\left( \left\Vert \nabla v\left( t\right)
\right\Vert _{L^{2}\left( \Omega \right) }^{2}\right) dt
\end{equation*}%
\begin{equation*}
-\frac{4}{T}\int\limits_{0}^{T}\int\limits_{\Omega }tf_{1}\left( \left\Vert
\nabla v\left( t\right) \right\Vert _{L^{2}\left( \Omega \right) }\right)
\Delta v\left( t,x\right) v_{t}\left( t,x\right) dxdt
\end{equation*}%
\begin{equation*}
-2F_{2}\left( \left\Vert v\left( T\right) \right\Vert _{L^{2}\left( \Omega
\right) }^{2}\right) +\frac{2}{T}\int\limits_{0}^{T}F_{2}\left( \left\Vert
v\left( t\right) \right\Vert _{L^{2}\left( \Omega \right) }^{2}\right) dt
\end{equation*}%
\begin{equation*}
+\frac{4}{T}\int\limits_{0}^{T}\int\limits_{\Omega }tf_{2}\left( \left\Vert
v\left( t\right) \right\Vert _{L^{2}\left( \Omega \right) }\right) v\left(
t,x\right) v_{t}\left( t,x\right) dxdt\text{,}
\end{equation*}%
which leads to%
\begin{equation*}
\limsup\limits_{m\rightarrow \infty }\limsup\limits_{l\rightarrow \infty }
\left[ \left\Vert v_{m}\left( T\right) -v_{l}\left( T\right) \right\Vert
_{H_{0}^{2}\left( \Omega \right) }^{2}+\left\Vert v_{m}\left( T\right)
-v_{l}\left( T\right) \right\Vert _{L^{2}\left( \Omega \right) }^{2}\right]
\text{ }
\end{equation*}%
\begin{equation*}
\leq \frac{c_{9}}{T}-2F_{1}\left( \left\Vert \nabla v\left( T\right)
\right\Vert _{L^{2}\left( \Omega \right) }^{2}\right) +\frac{2}{T}%
\int\limits_{0}^{T}F_{1}\left( \left\Vert \nabla v\left( t\right)
\right\Vert _{L^{2}\left( \Omega \right) }^{2}\right) dt
\end{equation*}%
\begin{equation*}
+\frac{2}{T}\int\limits_{0}^{T}t\frac{d}{dt}F_{1}\left( \left\Vert \nabla
v\left( t\right) \right\Vert _{L^{2}\left( \Omega \right) }^{2}\right) dt
\end{equation*}%
\begin{equation*}
-2F_{2}\left( \left\Vert v\left( T\right) \right\Vert _{L^{2}\left( \Omega
\right) }^{2}\right) +\frac{2}{T}\int\limits_{0}^{T}F_{2}\left( \left\Vert
v\left( t\right) \right\Vert _{L^{2}\left( \Omega \right) }^{2}\right) dt
\end{equation*}%
\begin{equation*}
+\frac{2}{T}\int\limits_{0}^{T}t\frac{d}{dt}F_{2}\left( \left\Vert v\left(
t\right) \right\Vert _{L^{2}\left( \Omega \right) }^{2}\right) dt\text{.}
\end{equation*}%
Then, after integration by parts, we obtain%
\begin{equation*}
\limsup\limits_{m\rightarrow \infty }\limsup\limits_{l\rightarrow \infty }
\left[ \left\Vert v_{m}\left( T\right) -v_{l}\left( T\right) \right\Vert
_{H_{0}^{2}\left( \Omega \right) }^{2}+\left\Vert v_{m}\left( T\right)
-v_{l}\left( T\right) \right\Vert _{L^{2}\left( \Omega \right) }^{2}\right]
\text{ }
\end{equation*}%
\begin{equation*}
\leq \frac{c_{9}}{T}\text{, \ \ \ \ }\forall T>0\text{,}
\end{equation*}%
which yields%
\begin{equation*}
\limsup\limits_{m\rightarrow \infty }\limsup\limits_{l\rightarrow \infty
}\left\Vert S\left( T+t_{k_{m}}-T_{0}\right) \varphi _{k_{m}}-S\left(
T+t_{k_{l}}-T_{0}\right) \varphi _{k_{l}}\right\Vert _{H_{0}^{2}\left(
\Omega \right) \times L^{2}\left( \Omega \right) }
\end{equation*}%
\begin{equation*}
\leq \frac{c_{10}}{\sqrt{T}}\text{, \ \ \ \ \ }\forall T>0\text{.}
\end{equation*}%
Choosing $T=T_{0}$ in the above inequality, we have%
\begin{equation*}
\limsup\limits_{m\rightarrow \infty }\limsup\limits_{l\rightarrow \infty
}\left\Vert S\left( t_{k_{m}}\right) \varphi _{k_{m}}-S\left(
t_{k_{l}}\right) \varphi _{k_{l}}\right\Vert _{H_{0}^{2}\left( \Omega
\right) \times L^{2}\left( \Omega \right) }^{2}\leq \frac{c_{10}}{\sqrt{T_{0}%
}}\text{ , \ }\forall T_{0}\geq 1\text{,}
\end{equation*}%
and consequently, we get%
\begin{equation*}
\liminf\limits_{k\rightarrow \infty }\liminf\limits_{m\rightarrow \infty
}\left\Vert S\left( t_{k}\right) \varphi _{k}-S\left( t_{m}\right) \varphi
_{m}\right\Vert _{H_{0}^{2}\left( \Omega \right) \times L^{2}\left( \Omega
\right) }=0\text{.}
\end{equation*}%
Thus, by using the argument at the end of the proof of [21, Lemma 3.4], we
complete the proof of the lemma.
\end{proof}

Secondly, let us show the following point dissipativity property of $\left\{
S\left( t\right) \right\} _{t\geq 0}$.

\begin{lemma}
Under the conditions (3.4)-(3.7),
\begin{equation*}
\lim_{t\rightarrow \infty }\inf_{\phi \in \mathcal{N}}\left\Vert S\left(
t\right) \theta -\phi \right\Vert _{H_{0}^{2}\left( \Omega \right) \times
L^{2}\left( \Omega \right) }=0
\end{equation*}%
holds for every $\theta \in H_{0}^{2}\left( \Omega \right) \times
L^{2}\left( \Omega \right) $. Here $\mathcal{N}$ is the set of stationary
points (for definition, see [22, p.35]) of $\left\{ S\left( t\right)
\right\} _{t\geq 0}$.
\end{lemma}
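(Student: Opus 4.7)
The plan is to combine Lemma 3.1 with the energy identity (3.9) and Theorem 1.1 in the classical gradient-system scheme. By Lemma 3.1, for any $\theta\in H_{0}^{2}(\Omega)\times L^{2}(\Omega)$ the $\omega$-limit set $\omega(\theta)$ is nonempty, compact, and invariant under $\{S(t)\}_{t\geq 0}$ in $H_{0}^{2}(\Omega)\times L^{2}(\Omega)$, so it suffices to show $\omega(\theta)\subset\mathcal{N}$. The energy $E$ is non-increasing along $S(\cdot)\theta$ by (3.9), bounded below along the orbit by virtue of (3.6), (3.7) and (3.10), and continuous on $H_{0}^{2}\times L^{2}$ by (3.7) together with Sobolev embedding; hence $E(S(t)\theta)\to E_{\infty}$ and $E\equiv E_{\infty}$ on $\omega(\theta)$.

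Fix $\phi_{0}\in \omega(\theta)$. By invariance there exists a complete bounded orbit $y$ with $(y(0),y_{t}(0))=\phi_{0}$ and $(y(t),y_{t}(t))\in \omega(\theta)$ for all $t\in\mathbb{R}$. Applying (3.9) on $[s,t]$ and using $E\equiv E_{\infty}$, the dissipation integral must vanish, so (3.5) yields $y_{t}\equiv 0$ on $\mathbb{R}\times\omega$; in particular $y(t,x)=\bar{y}(x)$ is $t$-independent on $\omega$ and $\alpha y_{t}\equiv 0$ on $\mathbb{R}\times\Omega$. Setting $g_{1}(t):=f_{1}(\|\nabla y(t)\|_{L^{2}(\Omega)})$ and $g_{2}(t):=f_{2}(\|y(t)\|_{L^{2}(\Omega)})$, both bounded and continuous on $\mathbb{R}$, the equation (3.1) reduces along the orbit to the linear identity
\begin{equation*}
y_{tt}+\Delta^{2}y-g_{1}(t)\Delta y+\bigl(g_{2}(t)+\beta(x_{1})\bigr)y=0.
\end{equation*}

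The decisive step is to promote $y_{t}\equiv 0$ on $\omega$ to $y_{t}\equiv 0$ on $\Omega$ via Theorem 1.1. Evaluating the linear equation on $\omega$ (where $y=\bar{y}$, so $y_{tt}=0$) at times $t$ and $0$ and subtracting gives the pointwise relation
\begin{equation*}
\bigl(g_{1}(t)-g_{1}(0)\bigr)\Delta\bar{y}(x)=\bigl(g_{2}(t)-g_{2}(0)\bigr)\bar{y}(x),\qquad x\in\omega,\ t\in\mathbb{R}.
\end{equation*}
I split into two cases. If some $x_{1},x_{2}\in\omega$ yield linearly independent vectors $(\Delta\bar{y}(x_{i}),-\bar{y}(x_{i}))$, the relation forces $g_{1}\equiv g_{1}(0)$ and $g_{2}\equiv g_{2}(0)$. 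Differentiating the linear equation in $t$ then shows $w:=y_{t}$ satisfies a homogeneous equation of the form (1.1) with constant $q_{\alpha}$ and $p(t,x_{1})=g_{2}(0)+\beta(x_{1})$, where $\beta\in L^{1}_{\mathrm{loc}}(\mathbb{R})$ by (3.6). Since $w\equiv 0$ on the neighborhood $\omega$ of $\partial\Omega$, the extension of $w$ by zero to $\mathbb{R}^{n}$ is a weak solution of this equation on $\mathbb{R}\times\mathbb{R}^{n}$ and vanishes for $|x|\geq r$, so Theorem 1.1 gives $w\equiv 0$. Otherwise the vectors are collinear, which forces either $\bar{y}\equiv 0$ on $\omega$, or $\Delta\bar{y}=\lambda\bar{y}$ on $\omega$ for a constant $\lambda$; in the latter subcase, classical elliptic unique continuation applied to the Cauchy data $\bar{y}=\partial_{\nu}\bar{y}=0$ on $\partial\Omega$ (from (3.2)) also yields $\bar{y}\equiv 0$ on $\omega$. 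Consequently $y\equiv 0$ on a neighborhood of $\partial\Omega$ uniformly in $t$, the extension of $y$ by zero to $\mathbb{R}^{n}$ is a compactly supported weak solution of the displayed linear equation, and Theorem 1.1 gives $y\equiv 0$, hence $y_{t}\equiv 0$.

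In every case $\phi_{0}=(y(0),0)$ with $y(0)\in\mathcal{N}$, so $\omega(\theta)\subset\mathcal{N}$ and the claim follows. The main obstacle is the unique continuation step: the nonlocal nonlinearities in $f_{1},f_{2}$ produce $t$-dependent coefficients, and showing these effectively reduce to the class of coefficients covered by Theorem 1.1 requires the dichotomy above together with a brief appeal to classical elliptic unique continuation.
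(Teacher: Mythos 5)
Your argument is essentially the paper's own proof: the same LaSalle/Lyapunov reduction to a complete orbit in $\omega(\theta)$ with vanishing dissipation, hence $v_t\equiv 0$ on $\mathbb{R}\times\omega$, the same subtraction of the equation at two times on $\omega$ to obtain an elliptic relation resolved by Holmgren's theorem and the Cauchy data on $\partial\Omega$, and the same zero-extension plus Theorem 1.1; your dichotomy on the linear independence of $(\Delta\bar{y},-\bar{y})$ is a repackaging of the paper's sequential contradiction arguments for the $f_1$- and $f_2$-terms. The one point to adjust is that in the constant-coefficient case the paper works with the difference quotients $w^{h}=v(\cdot+h)-v(\cdot)$ rather than with $\partial_t v$, since $w^{h}$ retains $H^{2}$ regularity in $x$ (so that $\beta(x_1)w^{h}\in L^{\infty}(\mathbb{R};L^{1}_{loc})$ and the hypotheses of Theorem 1.1 can be verified with $\beta$ merely in $L^{1}$), whereas $y_t$ is only $L^{2}$ in $x$ and the product $\beta y_t$ need not be locally integrable.
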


\begin{proof}
Let $\theta \in H_{0}^{2}\left( \Omega \right) \times L^{2}\left( \Omega
\right) $. From asymptotically compactness lemma, it follows that the $%
\omega $-limit set of $\theta $, namely,%
\begin{equation*}
\omega \left( \theta \right) =\underset{t\geq 0}{\cap }\overline{\underset{%
\tau \geq t}{\cup }S\left( \tau \right) \theta }
\end{equation*}%
is nonempty, compact in $H_{0}^{2}\left( \Omega \right) \times L^{2}\left(
\Omega \right) $, invariant with respect to $S\left( t\right) $ and%
\begin{equation}
\underset{t\rightarrow \infty }{\lim }\underset{\phi \in \omega \left(
\theta \right) }{\inf }\left\Vert S\left( t\right) \theta -\phi \right\Vert
_{H_{0}^{2}\left( \Omega \right) \times L^{2}\left( \Omega \right) }=0\text{.%
}  \tag{3.17}
\end{equation}%
Let $\left( u\left( t\right) ,u_{t}\left( t\right) \right) =S\left( t\right)
\theta $. Since, by (3.4), (3.6), (3.7) and (3.9), the Lyapunov functional $%
L\left( S\left( t\right) \theta \right) :=E\left( u\left( t\right)
,u_{t}\left( t\right) \right) $ is nonincreasing and bounded from below, it
has a limit at positive infinity, i.e.
\begin{equation}
\underset{t\rightarrow \infty }{\lim }L(S\left( t\right) \theta )=l\text{.}
\tag{3.18}
\end{equation}%
If $\varphi \in \omega \left( \theta \right) $, there exists a sequence $%
\left\{ t_{m}\right\} _{m=1}^{\infty }\subset \left[ 0,\infty \right) $ such
that $t_{m}\rightarrow \infty $ and%
\begin{equation*}
S\left( t_{m}\right) \theta \rightarrow \varphi \text{ \ strongly in \ }%
H_{0}^{2}\left( \Omega \right) \times L^{2}\left( \Omega \right) \text{ \ as
\ }t_{m}\rightarrow \infty \text{.}
\end{equation*}%
Since the Lyapunov functional $L$ is continuous on $H_{0}^{2}\left( \Omega
\right) \times L^{2}\left( \Omega \right) $, we get
\begin{equation*}
L\left( S\left( t_{m}\right) \theta \right) \rightarrow L\left( \varphi
\right) \text{ \ \ as\ \ }t_{m}\rightarrow \infty \text{,}
\end{equation*}%
which, together with (3.18), gives%
\begin{equation}
L\left( \varphi \right) =l\text{ , \ }\forall \varphi \in \omega \left(
\theta \right) \text{.}  \tag{3.19}
\end{equation}%
Since the damping term in (3.1) is linear, the semigroup $\left\{ S\left(
t\right) \right\} _{t\geq 0}$ can be extended to group $\left\{ S\left(
t\right) \right\} _{t\in
\mathbb{R}
}$ and considering invariance of $\omega \left( \theta \right) $, by (3.19),
we have
\begin{equation*}
L\left( S\left( t\right) \varphi \right) =l\text{, \ }\forall \varphi \in
\omega \left( \theta \right) \text{, \ }\forall t\in
\mathbb{R}
\text{.}
\end{equation*}%
Now, let$\ \varphi \in \omega \left( \theta \right) $ and set $\left(
v\left( t\right) ,v_{t}\left( t\right) \right) =S(t)\varphi $ for $t\in
\mathbb{R}
$. Then, using (3.9), which can be extended for all $t\geq s$ and $s\in
\mathbb{R}
,$ and considering the last equality, we obtain
\begin{equation*}
\int\nolimits_{s}^{t}\int\nolimits_{\Omega }\alpha \left( x\right)
\left\vert v_{t}\left( t,x\right) \right\vert ^{2}dxdt=0\text{, \ }\forall
t\geq s\text{, \ }s\in
\mathbb{R}
\text{.}
\end{equation*}%
Taking into account (3.4) in above equality, we get
\begin{equation*}
\alpha \left( x\right) \left\vert v_{t}\left( t,x\right) \right\vert ^{2}=0%
\text{, \ \ a.e. in }%
\mathbb{R}
\times \Omega \text{,}
\end{equation*}%
which, together with (3.1) and (3.5), gives us the following problem
\begin{equation}
\left\{
\begin{array}{l}
v_{tt}\left( t,x\right) +\Delta ^{2}v\left( t,x\right) -f_{1}\left(
\left\Vert \nabla v\left( t\right) \right\Vert _{L^{2}\left( \Omega \right)
}\right) \Delta v\left( t,x\right) \\
+f_{2}\left( \left\Vert v\left( t\right) \right\Vert _{L^{2}\left( \Omega
\right) }\right) v\left( t,x\right) +\beta (x_{1})v\left( t,x\right) =0\text{%
,\ \ \ }\left( t,x\right) \in
\mathbb{R}
\times \Omega \text{,} \\
v_{t}\left( t,x\right) =0\text{, \ \ }(t,x)\in
\mathbb{R}
\times \omega \text{.}%
\end{array}%
\right.  \tag{3.20}
\end{equation}%
Now, regarding (3.20), we will prove that%
\begin{equation}
v_{t}\left( t,x\right) =0\text{, \ \ a.e. in }\Omega \text{,}  \tag{3.21}
\end{equation}%
for all $t\in
\mathbb{R}
$. Firstly, assume that the term $f_{1}\left( \left\Vert \nabla v\left(
t\right) \right\Vert _{L^{2}\left( \Omega \right) }\right) $ is not
constant. Then\ there exist $t_{1}$, $t_{2}\in
\mathbb{R}
$ such that $f_{1}\left( \left\Vert \nabla v\left( t_{1}\right) \right\Vert
_{L^{2}\left( \Omega \right) }\right) \neq $ $f_{1}\left( \left\Vert \nabla
v\left( t_{2}\right) \right\Vert _{L^{2}\left( \Omega \right) }\right) $.
Since
\begin{equation*}
v_{t}\left( t,x\right) =0\text{, \ \ a.e. in }%
\mathbb{R}
\times \omega \text{,}
\end{equation*}%
$v$ is independent of $t$ in $%
\mathbb{R}
\times \omega $. Therefore, by using (3.20)$_{1}$, we get the following
equation%
\begin{equation*}
\Delta ^{2}v\left( t,x\right) -f_{1}\left( \left\Vert \nabla v\left(
t\right) \right\Vert _{L^{2}\left( \Omega \right) }\right) \Delta v\left(
t,x\right)
\end{equation*}%
\begin{equation}
+f_{2}\left( \left\Vert v\left( t\right) \right\Vert _{L^{2}\left( \Omega
\right) }\right) v\left( t,x\right) +\beta (x_{1})v\left( t,x\right) =0\text{%
, \ \ \ }\left( t,x\right) \in
\mathbb{R}
\times \omega \text{.}  \tag{3.22}
\end{equation}%
By writing the above equation at the point $t_{1}$ and $t_{2}$, subtracting
obtained equations and considering that $v$ is independent of $t$ in $%
\mathbb{R}
\times \omega $ , we get
\begin{equation*}
-\left( f_{1}\left( \left\Vert \nabla v\left( t_{2}\right) \right\Vert
_{L^{2}\left( \Omega \right) }\right) -f_{1}\left( \left\Vert \nabla v\left(
t_{1}\right) \right\Vert _{L^{2}\left( \Omega \right) }\right) \right)
\Delta v\left( t,x\right)
\end{equation*}%
\begin{equation*}
+\left( f_{2}\left( \left\Vert v\left( t_{2}\right) \right\Vert
_{L^{2}\left( \Omega \right) }\right) -f_{2}\left( \left\Vert v\left(
t_{1}\right) \right\Vert _{L^{2}\left( \Omega \right) }\right) \right)
v\left( t,x\right) =0\text{, \ \ }\left( t,x\right) \in
\mathbb{R}
\times \omega \text{.}
\end{equation*}%
Then we have
\begin{equation}
-\Delta v\left( t,x\right) +Cv\left( t,x\right) =0\text{, \ }\left(
t,x\right) \in
\mathbb{R}
\times \omega \text{,}  \tag{3.23}
\end{equation}%
where $C$ is the constant as follows
\begin{equation*}
C=\frac{f_{2}\left( \left\Vert v\left( t_{2}\right) \right\Vert
_{L^{2}\left( \Omega \right) }\right) -f_{2}\left( \left\Vert v\left(
t_{1}\right) \right\Vert _{L^{2}\left( \Omega \right) }\right) }{f_{1}\left(
\left\Vert \nabla v\left( t_{2}\right) \right\Vert _{L^{2}\left( \Omega
\right) }\right) -f_{1}\left( \left\Vert \nabla v\left( t_{1}\right)
\right\Vert _{L^{2}\left( \Omega \right) }\right) }\text{.}
\end{equation*}%
Applying Holmgren's theorem (see [1], [2]) to the equation (3.23), together
with boundary conditions (3.2), we find
\begin{equation*}
v\left( t,x\right) =0\text{, \ \ a.e. in \ }\omega \text{,}
\end{equation*}%
for all $t\in
\mathbb{R}
$. Then, by using extension outside of $%
\mathbb{R}
\times \Omega $ by zero, from (3.20), we obtain the problem%
\begin{equation*}
\left\{
\begin{array}{l}
\widetilde{v}_{tt}\left( t,x\right) +\Delta ^{2}\widetilde{v}\left(
t,x\right) -f_{1}\left( \left\Vert \nabla v\left( t\right) \right\Vert
_{L^{2}\left( \Omega \right) }\right) \Delta \widetilde{v}\left( t,x\right) +
\\
+f_{2}\left( \left\Vert v\left( t\right) \right\Vert _{L^{2}\left( \Omega
\right) }\right) \widetilde{v}\left( t,x\right) +\widetilde{\beta }(x_{1})%
\widetilde{v}\left( t,x\right) =0\text{,\ \ \ \ \ }\left( t,x\right) \in
\mathbb{R}
\times
\mathbb{R}
^{n}\text{,} \\
\widetilde{v}\left( t,x\right) =0\text{, \ \ \ \ \ }t\in
\mathbb{R}
\text{, \ }\left\vert x\right\vert \geq r\text{,}%
\end{array}%
\right.
\end{equation*}%
for some $r>0$, where $\widetilde{v}$ and $\widetilde{\beta }$ are the
extensions of $v$ and $\beta $. Since $\widetilde{v}\in C_{b}\left( \left[
0,\infty \right) ,H^{2}\left(
\mathbb{R}
^{n}\right) \right) $, by Theorem 1.1, we obtain%
\begin{equation*}
\widetilde{v}\left( t,x\right) =0\text{, \ \ a.e. in }%
\mathbb{R}
^{n}\text{,}
\end{equation*}%
for all $t\in
\mathbb{R}
$. But, then the term $f_{1}\left( \left\Vert \nabla v\left( t\right)
\right\Vert _{L^{2}\left( \Omega \right) }\right) $ is constant, which is a
contradiction. So, our assumption is false and $f_{1}\left( \left\Vert
\nabla v\left( t\right) \right\Vert _{L^{2}\left( \Omega \right) }\right) $
must be constant.

Now, secondly, assume that the term $f_{2}\left( \left\Vert v\left( t\right)
\right\Vert _{L^{2}\left( \Omega \right) }\right) $ is not constant. Then \
there exist $t_{1}$, $t_{2}\in
\mathbb{R}
$ such that $f_{2}\left( \left\Vert v\left( t_{1}\right) \right\Vert
_{L^{2}\left( \Omega \right) }\right) \neq $ $f_{2}\left( \left\Vert v\left(
t_{2}\right) \right\Vert _{L^{2}\left( \Omega \right) }\right) $. Therefore,
by using (3.22), considering that $v$ is independent of $t$ in $%
\mathbb{R}
\times \omega $ and $f_{1}\left( \left\Vert \nabla v\left( t\right)
\right\Vert _{L^{2}\left( \Omega \right) }\right) $ is constant, we get the
following equation
\begin{equation*}
\left( f_{2}\left( \left\Vert v\left( t_{2}\right) \right\Vert _{L^{2}\left(
\Omega \right) }\right) -f_{2}\left( \left\Vert v\left( t_{1}\right)
\right\Vert _{L^{2}\left( \Omega \right) }\right) \right) v\left( t,x\right)
=0\text{, \ \ }(t,x)\in
\mathbb{R}
\times \omega \text{.}
\end{equation*}%
Then we have%
\begin{equation*}
v\left( t,x\right) =0\text{, \ \ a.e. in }\omega \text{,}
\end{equation*}%
for all $t\in
\mathbb{R}
$. Thus, by using extension outside of $%
\mathbb{R}
\times \Omega $ by zero, from (3.20), we obtain the problem%
\begin{equation*}
\left\{
\begin{array}{l}
\widetilde{v}_{tt}\left( t,x\right) +\Delta ^{2}\widetilde{v}\left(
t,x\right) -f_{1}\left( \left\Vert \nabla v\left( t\right) \right\Vert
_{L^{2}\left( \Omega \right) }\right) \Delta \widetilde{v}\left( t,x\right) +
\\
+f_{2}\left( \left\Vert v\left( t\right) \right\Vert _{L^{2}\left( \Omega
\right) }\right) \widetilde{v}\left( t,x\right) +\widetilde{\beta }(x_{1})%
\widetilde{v}\left( t,x\right) =0\text{, \ \ \ \ }\left( t,x\right) \in
\mathbb{R}
\times
\mathbb{R}
^{n}\text{,} \\
\widetilde{v}\left( t,x\right) =0\text{, \ \ \ \ }t\in
\mathbb{R}
\text{, \ \ }\left\vert x\right\vert \geq r\text{,}%
\end{array}%
\right.
\end{equation*}%
for some $r>0$, where $\widetilde{v}$ and $\widetilde{\beta }$ are the
extensions of $v$ and $\beta $. Similarly, by applying Theorem 1.1, we obtain%
\begin{equation*}
\widetilde{v}\left( t,x\right) =0\text{, \ \ a.e. in }%
\mathbb{R}
^{n}\text{,}
\end{equation*}%
for all $t\in
\mathbb{R}
$. Then, it follows that $f_{2}\left( \left\Vert v\left( t\right)
\right\Vert _{L^{2}\left( \Omega \right) }\right) $ is constant, which
contradicts our assumption. So, our assumption is false and $f_{2}\left(
\left\Vert v\left( t\right) \right\Vert _{L^{2}\left( \Omega \right)
}\right) $ must be constant.

Consequently, $f_{1}\left( \left\Vert \nabla v\left( t\right) \right\Vert
_{L^{2}\left( \Omega \right) }\right) $ and $f_{2}\left( \left\Vert v\left(
t\right) \right\Vert _{L^{2}\left( \Omega \right) }\right) $ must be
constants. Let \newline
$f_{1}\left( \left\Vert \nabla v\left( t\right) \right\Vert _{L^{2}\left(
\Omega \right) }\right) \equiv c_{1}\geq 0$ and $f_{2}\left( \left\Vert
v\left( t\right) \right\Vert _{L^{2}\left( \Omega \right) }\right) \equiv
c_{2}\geq 0$. Then, by (3.20), we have%
\begin{equation*}
\left\{
\begin{array}{c}
v_{tt}\left( t,x\right) +\Delta ^{2}v\left( t,x\right) -c_{1}\Delta v\left(
t,x\right) +c_{2}v\left( t,x\right) +\beta (x_{1})v\left( t,x\right) =0\text{%
, }\left( t,x\right) \in
\mathbb{R}
\times \Omega \text{,} \\
v_{t}\left( t,x\right) =0\text{, \ \ }\left( t,x\right) \in
\mathbb{R}
\times \omega \text{.}%
\end{array}%
\right.
\end{equation*}%
Denoting $w^{h}=v(t+h)-v(t)$, from the above problem, we find%
\begin{equation*}
\left\{
\begin{array}{c}
w_{tt}^{h}\left( t,x\right) +\Delta ^{2}w^{h}\left( t,x\right) -c_{1}\Delta
w^{h}\left( t,x\right) +c_{2}w^{h}\left( t,x\right) +\beta
(x_{1})w^{h}\left( t,x\right) =0\text{, \ \ }(t,x)\in
\mathbb{R}
\times \Omega \text{,} \\
w^{h}\left( t,x\right) =0\text{, \ \ }\left( t,x\right) \in
\mathbb{R}
\times \omega \text{.}%
\end{array}%
\right.
\end{equation*}%
Here, by using extension outside of $%
\mathbb{R}
\times \Omega $ by zero, from the last problem, we get
\begin{equation*}
\left\{
\begin{array}{c}
\widetilde{w}_{tt}^{h}\left( t,x\right) +\Delta ^{2}\widetilde{w}^{h}\left(
t,x\right) -c_{1}\Delta \widetilde{w}^{h}\left( t,x\right) +c_{2}\widetilde{w%
}^{h}\left( t,x\right) +\widetilde{\beta }(x_{1})\widetilde{w}^{h}\left(
t,x\right) =0\text{, \ \ }\left( t,x\right) \in
\mathbb{R}
\times
\mathbb{R}
^{n}\text{,} \\
\widetilde{w}^{h}\left( t,x\right) =0\text{, \ \ }t\in
\mathbb{R}
\text{, }\left\vert x\right\vert \geq r\text{,}%
\end{array}%
\right.
\end{equation*}%
for some $r>0$, where $\widetilde{w}^{h}$ and $\widetilde{\beta }$ are the
extensions of $w^{h}$ and $\beta $. Since $\widetilde{w}^{h}\in C_{b}\left(
[0,\infty ),H^{2}\left(
\mathbb{R}
^{n}\right) \right) $, from Theorem 1.1, it follows that%
\begin{equation*}
\widetilde{w}^{h}\left( t,x\right) =0\text{, \ a.e. in }%
\mathbb{R}
^{n}\text{,}
\end{equation*}%
for all $t,h\in
\mathbb{R}
$, which yields (3.21). Then we have
\begin{equation*}
S\left( t\right) \varphi =\varphi \text{ , \ }\forall t\in
\mathbb{R}
\text{.}
\end{equation*}%
As a consequence, we obtain $\omega (\theta )\subset \mathcal{N}$ and by
(3.17), the proof of the lemma is complete.\newline
\end{proof}

\ Now, we can prove the main theorem of this section.

\begin{theorem}
Under conditions (3.4)-(3.7), the semigroup $\left\{ S\left( t\right)
\right\} _{t\geq 0}$ generated by (3.1)-(3.3) possesses a global attractor $%
\mathcal{A}$ in $H_{0}^{2}\left( \Omega \right) \times L^{2}\left( \Omega
\right) $ and $\mathcal{A=M}^{u}\left( \mathcal{N}\right) $. Here $\mathcal{M%
}^{u}\left( \mathcal{N}\right) $ is unstable manifold emanating from the set
of stationary points $\mathcal{N}$ (for definition, see [22, p.35]).
Moreover, the global attractor $\mathcal{A}$ consists of full trajectories $%
\gamma =\left\{ u\left( t\right) :t\in
\mathbb{R}
\right\} $ such that
\begin{equation}
\left\{
\begin{array}{c}
\underset{t\rightarrow -\infty }{\lim }\underset{v\in \mathcal{N}}{\inf }%
\left\Vert u\left( t\right) -v\right\Vert _{H_{0}^{2}(\Omega )\times
L^{2}(\Omega )}=0, \\
\underset{t\rightarrow \infty }{\lim }\underset{v\in \mathcal{N}}{\inf }%
\left\Vert u\left( t\right) -v\right\Vert _{H_{0}^{2}\left( \Omega \right)
\times L^{2}\left( \Omega \right) }=0\text{\ }%
\end{array}%
\right. \text{\ .}  \tag{3.24}
\end{equation}
\end{theorem}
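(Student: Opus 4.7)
The plan is to combine Lemma 3.1 (asymptotic compactness) and Lemma 3.2 (the forward trajectory of every point is attracted to the set $\mathcal{N}$ of stationary points) with the Lyapunov structure provided by the energy $E$, and then to invoke the standard theory of gradient systems (as in [22, Chapter 7]) to identify the attractor with the unstable manifold.

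First I would verify that $\mathcal{N}$ is bounded in $H_{0}^{2}(\Omega)\times L^{2}(\Omega)$: any stationary solution $(w,0)$ satisfies
\begin{equation*}
\Delta^{2}w - f_{1}(\|\nabla w\|_{L^{2}(\Omega)})\,\Delta w + f_{2}(\|w\|_{L^{2}(\Omega)})\,w + \beta(x_{1})\,w = 0
\end{equation*}
in the weak sense, and testing with $w$ together with (3.6), (3.7) and the Poincar\'e inequality produces an a priori bound on $\|w\|_{H_{0}^{2}(\Omega)}$. Next, using that the energy functional $E$ is continuous on $H_{0}^{2}\times L^{2}$, nonincreasing along $S(t)$ by (3.9), and coercive (it controls $\tfrac{1}{2}(\|u_{t}\|_{L^{2}}^{2}+\|\Delta u\|_{L^{2}}^{2})$ up to lower order terms handled via (3.6)-(3.7)), I would derive the existence of a bounded absorbing set. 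Indeed, by Lemma 3.2 every forward orbit eventually enters a neighborhood of the bounded set $\mathcal{N}$, so by continuity of $E$ the energy eventually drops below $\sup_{\mathcal{N}}E+1$, and by monotonicity it stays below this level thereafter; coercivity of $E$ then yields the desired bounded absorbing set $\mathcal{B}\subset H_{0}^{2}(\Omega)\times L^{2}(\Omega)$.

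The existence of a compact global attractor $\mathcal{A}$ now follows from the classical result [19] applied to $\{S(t)\}_{t\geq 0}$, which is asymptotically compact by Lemma 3.1 and (bounded) dissipative by the previous step. To identify $\mathcal{A}=\mathcal{M}^{u}(\mathcal{N})$, I would invoke the abstract characterization for gradient systems [22, Theorem 7.5.6]: the crucial hypothesis that $E$ is a strict Lyapunov function in the generalized sense (namely, that $E(S(t)\theta)$ constant in $t\in\mathbb{R}$ along a complete bounded trajectory forces $\theta\in\mathcal{N}$) has already been established inside the proof of Lemma 3.2, where Theorem 1.1 was the essential ingredient.

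Finally, for (3.24), the forward convergence is precisely Lemma 3.2. For the backward convergence, note that since the damping is linear, the semigroup $\{S(t)\}_{t\geq 0}$ extends to a group $\{S(t)\}_{t\in\mathbb{R}}$, and complete trajectories $\gamma=\{u(t):t\in\mathbb{R}\}\subset \mathcal{A}$ are globally bounded in $H_{0}^{2}\times L^{2}$. Along any such $\gamma$ the energy is monotone nonincreasing and bounded, hence admits limits at $\pm\infty$; the argument from Lemma 3.2, applied to the backward-in-time dynamics (so that the roles of the $\omega$- and $\alpha$-limit sets are interchanged) and using Theorem 1.1, then forces the $\alpha$-limit set of $\gamma$ to lie inside $\mathcal{N}$, which yields the first line of (3.24). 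The main obstacle---the unique continuation that rules out non-stationary trajectories of constant energy---has been resolved by Theorem 1.1; everything else is an application of abstract attractor theory.
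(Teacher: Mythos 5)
Your proposal is correct and follows essentially the same route as the paper: existence of $\mathcal{A}$ from Lemma 3.1, Lemma 3.2 and [19, Theorem 3.1], and the identification $\mathcal{A}=\mathcal{M}^{u}(\mathcal{N})$ together with (3.24) from the group extension and the $\alpha$-limit set argument that reuses the proof of Lemma 3.2 (hence Theorem 1.1). The only differences are cosmetic: you insert an explicit bounded-absorbing-set construction (redundant once one invokes [19], where point dissipativity plus asymptotic compactness already suffices, though your remark that $\mathcal{N}$ must be shown bounded is a detail the paper leaves implicit), and you package the identification via the abstract gradient-system theorem of [22] where the paper proves the two inclusions directly.
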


\begin{proof}
From Lemma 3.1, Lemma 3.2 and [19, Theorem 3.1], it follows that there
exists a global attractor $\mathcal{A}$ in $H_{0}^{2}\left( \Omega \right)
\times L^{2}\left( \Omega \right) $. Then $\mathcal{M}^{u}\left( \mathcal{N}%
\right) \subset \mathcal{A}$ (see [22, Lemma 3.2, p.36]). Now, we prove the
other side of the inclusion. At first, as we discussed in the proof of Lemma
3.2, the semigroup $\left\{ S\left( t\right) \right\} _{t\geq 0}$ can be
extended to group $\left\{ S\left( t\right) \right\} _{t\in
\mathbb{R}
}$ . Let $\theta \in $ $\mathcal{A}$ be any point. Then it follows that the $%
\alpha $-limit set of $\theta $, namely,%
\begin{equation*}
\alpha \left( \theta \right) =\underset{t\leq 0}{\cap }\overline{\underset{%
\tau \leq t}{\cup }S\left( \tau \right) \theta }
\end{equation*}%
is nonempty, compact in $H_{0}^{2}\left( \Omega \right) \times L^{2}\left(
\Omega \right) $, invariant with respect to $S\left( t\right) $ and%
\begin{equation*}
\underset{t\rightarrow -\infty }{\lim }\underset{\phi \in \alpha \left(
\theta \right) }{\inf }\left\Vert S\left( t\right) \theta -\phi \right\Vert
_{H_{0}^{2}\left( \Omega \right) \times L^{2}\left( \Omega \right) }=0\text{.%
}
\end{equation*}%
By using the idea of the proof of Lemma 3.2, we obtain $\alpha (\theta
)\subset \mathcal{N}$ and hence, from the last equality,%
\begin{equation*}
\lim_{t\rightarrow -\infty }\inf_{v\in \mathcal{N}}\left\Vert S\left(
t\right) \theta -v\right\Vert _{H_{0}^{2}(\Omega )\times L^{2}(\Omega )}=0%
\text{, \ }\forall \theta \in \mathcal{A}\text{,}
\end{equation*}%
which gives (3.24)$_{1}$. Therefore, from the definition of unstable
manifold, we get
\begin{equation*}
\mathcal{A\subset M}^{u}\left( \mathcal{N}\right) \text{,}
\end{equation*}%
and consequently%
\begin{equation*}
\mathcal{A}=\mathcal{M}^{u}\left( \mathcal{N}\right) \text{.}
\end{equation*}%
Moreover, by Lemma 3.2, we have (3.24)$_{2}$ and hence, the proof is
complete.
\end{proof}

\end{document}